\numberwithin{equation}{section}
\declaretheoremstyle[
  bodyfont=\normalfont\itshape,
  headformat=\NAME\ \NUMBER\NOTE,
]{myplain}
\declaretheoremstyle[
  headformat=\NAME\ \NUMBER\NOTE,
]{mydefinition}
\newcommand{\envqed}{{\lower-0.3ex\hbox{$\triangleleft$}}}
\declaretheorem[style=myplain,numberwithin=section]{theorem}
\declaretheorem[style=myplain,numberlike=theorem]{lemma}
\declaretheorem[style=myplain,numberlike=theorem]{corollary}
\declaretheorem[style=mydefinition,numberlike=theorem,qed=\envqed]{remark}
\let\epsilon\varepsilon
\let\phi\varphi
\let\rho\varrho
\DeclareMathOperator{\mmod}{mod}
\renewcommand{\O}{\mathcal{O}}
\providecommand\R{}
\renewcommand{\R}{\mathbb{R}}
\providecommand\C{}
\renewcommand{\C}{\mathbb{C}}
\newcommand{\etanew}{\eta^{\mathrm{new}}}
\NewDocumentCommand{\RK}{o m O{\the\numexpr#2-1\relax} m O{} O{} o}{%
  \IfValueTF{#1}{#1}{RK}%
  #2(#3)#4%
  \ifblank{#6}{}{\textsubscript{F}}%
  \ifblank{#5}{}{[#5]}%
  \IfValueT{#7}{#7}%
}
\newcommand{\dt}{\Delta t}
\renewcommand{\vec}[1]{\pmb{#1}}
\NewDocumentCommand{\opD}{m+g}{%
  \IfNoValueTF{#2}
    {D_{#1}}
    {D_{#1,#2}}%
}
\NewDocumentCommand{\opDsplit}{m+g}{%
  \IfNoValueTF{#2}
    {\widetilde{D}_{#1}}
    {\widetilde{D}_{#1,#2}}%
}
\NewDocumentCommand{\opM}{g}{%
  \IfNoValueTF{#1}
    {M}
    {M_{#1}}%
}
\NewDocumentCommand{\opQ}{g}{%
  \IfNoValueTF{#1}
    {Q}
    {Q_{#1}}%
}
\NewDocumentCommand{\opI}{g}{%
  \IfNoValueTF{#1}
    {I}
    {I_{#1}}%
}
\NewDocumentCommand{\opV}{g}{%
  \IfNoValueTF{#1}
    {V}
    {V_{#1}}%
}
\NewDocumentCommand{\opB}{g}{%
  \IfNoValueTF{#1}
    {B}
    {B_{#1}}%
}
\NewDocumentCommand{\opR}{g}{%
  \IfNoValueTF{#1}
    {R}
    {R_{#1}}%
}
\NewDocumentCommand{\opN}{m+g}{%
  \IfNoValueTF{#2}
    {N_{#1}}
    {N_{#1,#2}}%
}
\NewDocumentCommand{\fnum}{g}{%
  \IfNoValueTF{#1}
    {f^{\mathrm{num}}}
    {f^{\mathrm{num,#1}}}%
}
\NewDocumentCommand{\vecfnum}{g}{%
  \IfNoValueTF{#1}
    {\vec{f}^{\mathrm{num}}}
    {\vec{f}^{\mathrm{num,#1}}}%
}
\NewDocumentCommand{\vecfcorr}{g}{%
  \IfNoValueTF{#1}
    {\vec{f}^{\mathrm{corr}}}
    {\vec{f}^{\mathrm{corr,#1}}}%
}
\NewDocumentCommand{\fvol}{g}{%
  \IfNoValueTF{#1}
    {f^{\smash{\mathrm{vol}}}}
    {f^{\smash{\mathrm{vol,#1}}}}%
}
\definecolor{col1}{RGB}{         0  135.4688  255.0000}
\definecolor{col2}{RGB}{         0    9.9609  255.0000}
\definecolor{col3}{RGB}{  115.5469         0  255.0000}
\definecolor{col4}{RGB}{  241.0547         0  255.0000}
\definecolor{col5}{RGB}{  255.0000         0  143.4375}
\definecolor{col6}{RGB}{  255.0000         0   17.9297}
\newcommand{\orcid}[1]{ORCID:~\href{https://orcid.org/#1}{#1}}
\newenvironment{keywords}{\par\textbf{Key words.}}{\par}
\newenvironment{AMS}{\par\textbf{AMS subject classification.}}{\par}
\title{Multiderivative time integration methods preserving nonlinear functionals via relaxation}
\author[1]{Hendrik~Ranocha\thanks{\orcid{0000-0002-3456-2277}}}
\affil[1]{Institute of Mathematics, Johannes Gutenberg University Mainz, Staudingerweg 9, 55128 Mainz, Germany}
\author[2]{Jochen Schütz\thanks{\orcid{0000-0002-6355-9130}}}
\affil[2]{Faculty of Sciences \& Data Science Institute, Hasselt University, Agoralaan Gebouw D, BE-3590 Diepenbeek, Belgium}
\date{February 7, 2024} 
\begin{document}

\maketitle

\begin{abstract}
\noindent
  We combine the recent relaxation approach with multiderivative Runge-Kutta methods
to preserve conservation or dissipation of entropy functionals for
ordinary and partial differential equations. Relaxation methods are
minor modifications of explicit and implicit schemes, requiring only the solution
of a single scalar equation per time step in addition to the baseline scheme.
We demonstrate the robustness of the
resulting methods for a range of test problems including the 3D compressible Euler
equations. In particular, we point out improved error growth rates for certain
entropy-conservative problems including nonlinear dispersive wave equations.

\end{abstract}

\begin{keywords}
  two-derivative methods,
  multiderivative methods,
  invariants,
  conservative systems,
  dissipative systems,
  structure-preserving methods
\end{keywords}

\begin{AMS}
  65L06,  
  65M20,  
  65M70   
\end{AMS}

\section{Introduction}
\label{sec:introduction}

Preserving the correct evolution of a (nonlinear) functional of the solution
of a differential equation is an important task in many areas. For ordinary
differential equations (ODEs), such a structure-preserving approach is
important, e.g., as energy-conserving method for Hamiltonian problems.
In general, it falls into the realm of geometric numerical integration
\cite{hairer2006geometric}.

In this article, we concentrate on the preservation of functionals under multiderivative
time discretization, including the conservation of invariants as well as
dissipative problems. Building on our previous work \cite{ranocha2023functional},
we use the relaxation approach developed recently in
\cite{ketcheson2019relaxation,ranocha2020relaxation,ranocha2020general}.
The first ideas of relaxation go back to \cite{sanzserna1982explicit}
and have been refined
since then. Originally developed for Runge-Kutta and linear multistep methods,
the general theory of \cite{ranocha2020general} can be applied to a wide range
of time integration schemes including also deferred correction and ADER methods
\cite{abgrall2022relaxation,gaburro2023high} as well as IMEX schemes
\cite{kang2022entropy,li2022implicit,li2023linearly}. The construction can be
extended to multiple invariants \cite{biswas2023multiple,biswas2023accurate}
and implicit schemes not relying on an exact solution of the equations
\cite{linders2023resolving}.
Relaxation methods have been applied successfully to Hamiltonian problems
\cite{ranocha2020relaxationHamiltonian,zhang2020highly},
kinetic equations \cite{leibner2021new},
entropy-stable methods for compressible fluid flows
\cite{waruszewski2022entropy,ranocha2020fully},
and nonlinear dispersive wave equations
\cite{ranocha2021broad,mitsotakis2021conservative,ranocha2021rate}.

Multiderivative schemes are numerical integration schemes for differential equations that can be traced back to at least \cite{Turan1950} in the context of quadrature rules; for ODEs, they have been discussed in \cite{KastlungerWanner1972,HaWa73}.
We do not attempt to give a full historic overview, but refer the reader to \cite[Section 2.1]{seal2014high} for one such thorough overview.
In contrast to more established schemes, multiderivative schemes do not only use the ODE's right-hand side, but also temporal derivatives thereof. Recently, these methods have gained renewed interest, as the construction principle allows for very flexible schemes, be it with respect to SSP properties, e.g., \cite{gottlieb2022high,AAF2020}, efficiency, e.g., \cite{seal2014high,abdi2020implementation,schutz2022parallel,ZEIFANG2023128198}, stability, e.g.,  \cite{chan2010explicit,schutz2021asymptotic,zeifang2022stability} and many others. In this work, we are to our knowledge the first to combine relaxation with the multiderivative approach; results with up to four derivatives will be shown.

In the following Section~\ref{sec:basics}, we briefly review the construction
of multiderivative Runge-Kutta methods and the relaxation approach.
Section~\ref{sec:multiderivative-relaxation} is devoted to the development
of appropriate entropy estimates for multiderivative methods applied to
non-conservative problems. In Section~\ref{sec:stability}, we discuss stability
properties of the relaxed methods. Next, we present numerical experiments in
Section~\ref{sec:numerical_experiments}, ranging from convergence experiments
to robustness demonstrations for PDE discretizations as well as qualitative and
quantitative improvements via reduced error growth rates. Finally, we summarize
the results in Section~\ref{sec:summary}.

\section{Basic ideas of multiderivative and relaxation methods}
\label{sec:basics}
  First, we present the basic ideas of multiderivative time integration methods
  as well as relaxation schemes. Thereafter, we will describe our approach to
  combine them in Section~\ref{sec:multiderivative-relaxation}.

  \subsection{Multi-derivative Runge-Kutta methods}
  To introduce the methods considered in this paper, we define the initial-value problem
\begin{equation}
\label{eq:ODE}
  u'(t) = f(u(t)),
  \quad
  u(0) = u^0
\end{equation}
for given right-hand side $f\colon \R^d \rightarrow \R^d$ and initial condition $u^0 \in \R^d$.
The basic idea of multiderivative methods is to use not only $u'(t)$, which equals $f(u(t))$,
but also higher derivatives such as $u''(t)$, $u'''(t)$, \dots to compute
a numerical approximation.
All these quantities can be computed using the available data, e.g., $u''(t) = f'(u)f(u)$. (Please note that for the ease of presentation, we from now on refrain from explicitly writing the $t-$dependency of $u$ in, e.g., terms as $f(u)$.)
The most well-known class of schemes using this
information are Taylor series methods, which just approximate the solution
in one time step by a truncated Taylor series. In this article, we are
instead interested in multiderivative methods using additionally the idea to
compute multiple internal stages as in Runge-Kutta methods.

Define the higher-order temporal derivatives of $f(u)$ through
\begin{align*}
g^{(1)}(u) := f(u), \quad g^{(2)}(u) := f'(u) f(u),
\quad g^{(3)}(u) := f'(u) f'(u) f(u) + f''(u)\bigl( f(u), f(u) \bigr)
\end{align*}
and so on.
Note that for the exact solution to \eqref{eq:ODE}, there holds $u^{(k)}(t) = g^{(k)}(u)$, where $u^{(k)}(t)$ denotes the $k-$th temporal derivative of $u$.
Please note that it gets increasingly complex to compute the quantities $g^{(k)}(u)$; we hence limit our numerical computations to four derivatives. With this
notation, a multi-derivative Runge-Kutta method with $s$ stages $y^i$ can be
written as, e.g., in \cite{KastlungerWanner1972},
\begin{equation}
\label{eq:multi-derivative-RK}
\begin{aligned}
  y^i &= u^n + \sum_{k=1}^m \dt^k \sum_{j=1}^s a^{(k)}_{ij} g^{(k)}(y^j),
  \\
  u^{n+1} &= u^n + \sum_{k=1}^m \dt^k \sum_{i=1}^s b^{(k)}_i g^{(k)}(y^i).
\end{aligned}
\end{equation}
$m$ is the number of derivatives taken into account (there holds $m=1$ for the classical Runge-Kutta schemes); the coefficients $a^{(k)}_{ij}$ and $b^{(k)}_i$ can as usual be represented through an extended Butcher tableau
\begin{equation}
  \renewcommand*{\arraystretch}{1.4}
  \begin{array}{c | c || c || c || c}
    c & A^{(1)} & A^{(2)} & \ldots & A^{(m)} \\
    \hline
      & b^{(1)} & b^{(2)} & \ldots & b^{(m)}
  \end{array}
\end{equation}
where $A^{(k)} = (a^{(k)}_{ij})_{ij} \in \R^{s \times s}$, $b^{(k)} = (b_i^{(k)})_{i} \in \R^s$, and
$c = (c_i)_{i} \in \R^s$ with $c_i = \sum_{j=1}^s a^{(1)}_{ij}$.

In this work, we will use the following classes of schemes:
\begin{itemize}
 \item The third-, fourth-, and fifth-order explicit schemes of Chan and Tsai \cite{chan2010explicit}. They belong to the class of two-derivative schemes; only their order $p$ and number of stages $s$ differ. They are referred to as CT($p$, $s$). For convenience, the Butcher tableaux have been reported in Sec.~\ref{sec:ctbutcher}. Note that CT(4, 2) has the same stability region as the original fourth-order Runge-Kutta method, but requires only one $g^{(1)}$ and two $g^{(2)}$ evaluations instead of four $g^{(1)}$ evaluations.
 \item The explicit three-derivative schemes of order $p = 5$ and $p=7$, respectively, of Turac{i} and \"{O}zi\c{s} \cite{TurTur2017}. They are referred to as TO($p$, $s$), with $s$ denoting again the number of stages. Butcher tableaux are reported in Sec.~\ref{sec:turturbutcher}.
 \item The third- and fourth-order two-derivative SSP schemes from Gottlieb and co-workers \cite{gottlieb2022high}; they are referred to as SSP-I2DRK3-2s and SSP-I2DRK4-5s, respectively. Also here, the Butcher tableaux can be found in Sec.~\ref{sec:sspbutcher}.
 \item Some examples of a class of collocation-based, fully implicit schemes, called Hermite-Birkhoff methods. We refer to these methods as HB-I$m$DRK$p$-$s$s, where again $m$ denotes the number of derivatives, $p$ denotes the order and $s$ the number of stages. Butcher tableaux of HB-I2DRK4-2s and HB-I2DRK6-3s can be found in Tables \ref{tbl:contHB-I2DRK4-2s} and \ref{tbl:contHB-I2DRK6-3s}, respectively (set $\theta = 1$); all the other methods can be generated through a Matlab code in our reproducibility repository \cite{ranocha2023multiderivativeRepro}.
 These methods are defined by prescribing the coefficients $c$ through an equidistant spacing of $[0, 1]$, i.e., $c_i := \frac {i-1} {s-1}$, $1 \leq i \leq s$. Then, a standard collocation approach is performed, with the exception that $m$ derivatives of $u$ are taken into account rather than only one. For more information, consult \cite{hairer2008solving} (for $m=1$) and \eqref{eq:collocation}.
 An exception to this construction principle is the HB-I2DRK3-2s method that uses Hermite-Birkhoff interpolation on $c = (0, 1)^T$ with one derivative at the left, and two derivatives at the right point. In two-point notation, it can be written as
   \begin{align*}
      u^{n+1} = u^n + \frac{\dt}{3} \left(g^{(1)}(u^n) + 2 g^{(1)}(u^{n+1})\right) - \frac{\dt^2}{6} g^{(2)}(u^{n+1});
   \end{align*}
 it is both $A-$ and $L-$stable.
\end{itemize}

  \subsection{Relaxation methods}
  Next, we describe relaxation methods following the general theory of
\cite{ranocha2020general}, specialized to one-step methods.
Consider a one-step method computing a numerical solution $u^{n+1}$
at time $t^{n+1} = t^{n} + \dt$ from the previous step $u^{n}$.
Assume that there is a functional $\eta$ of the solution $u$ of
\eqref{eq:ODE} that should be preserved. In many cases, the functional $\eta$
can be interpreted as an ``energy'' or ``entropy'' of the solution.
In this paper, we will usually call $\eta$ an entropy.
Some important cases are invariants satisfying
$\frac{\dif}{\dif t} \eta\bigl( u(t) \bigr) = 0$
and dissipative problems where
$\frac{\dif}{\dif t} \eta\bigl( u(t) \bigr) \le 0$.

The basic idea of relaxation methods is to post-process a baseline
solution and continue the numerical integration using
\begin{equation}
  u^{n+1}_\gamma = u^{n} + \gamma \left( u^{n+1} - u^{n} \right)
\end{equation}
as the numerical approximation at time
\begin{equation}
  t^{n+1}_\gamma = t^{n} + \gamma \left( t^{n+1} - t^{n} \right)
\end{equation}
instead of the baseline solution $u^{n+1}$ at time $t^{n+1}$.
The relaxation parameter $\gamma$ is computed by solving the scalar
root finding problem
\begin{equation}
  \eta(u^{n+1}_\gamma)
  = \eta(u^{n}) + \gamma \bigl( \etanew - \eta(u^{n}) \bigr),
\end{equation}
where $\etanew$ is an estimate of the entropy at $t^{n+1}$, i.e.,
$\etanew \approx \eta(u(t^{n+1}))$.
For entropy-conservative problems, we use $\etanew = \eta(u^{n})$,
ensuring discrete entropy conservation of the form
$\eta(u^{n+1}_\gamma) = \eta(u^{n})$.
For dissipative problems, we require $\etanew \le \eta(u^{n})$,
resulting in discrete entropy dissipation of the form
$\eta(u^{n+1}_\gamma) \le \eta(u^{n})$.
We will describe how to obtain such estimates for multiderivative
methods in the following Section~\ref{sec:multiderivative-relaxation}.

\begin{remark}
 If the entropy is given by an inner product norm, i.e.,
$\eta(u) = \| u \|^2 = \langle u, u \rangle$, the relaxation parameter
can be computed explicitly \cite{ketcheson2019relaxation,ranocha2020general}.
For a conservative problem,
\begin{equation}
  \gamma = -2 \frac{\langle u^{n}, u^{n+1} - u^{n} \rangle}
                   {\| u^{n+1} - u^{n} \|^2},
\end{equation}
while a general (e.g., dissipative) problem yields
\begin{equation}
  \gamma = \frac{\etanew - \eta(u^{n}) - 2 \langle u^{n}, u^{n+1} - u^{n} \rangle}
                {\| u^{n+1} - u^{n} \|^2}.
\end{equation}

If general entropies are considered, one has to rely on \emph{scalar} root-finding algorithms such as Newton-Raphson or bisection.

\end{remark}

The theory of \cite{ranocha2020general} yields the following result:
\begin{theorem}
  Consider the relaxation procedure described above for a one-step
  method of order $p \ge 2$ with exact value at time $t^{n}$, i.e.,
  $u^{n+1} = u(t^{n+1}) + \O(\dt^{p+1})$.
  If the time step $\dt$ is sufficiently small,
  $\etanew = \eta\bigl( u(t^{n+1}) \bigr) + \O(\dt^{p+1})$, and
  \begin{equation}
  \label{eq:entropy-non-degenerate}
    \eta'(u^{n+1}) \frac{u^{n+1} - u^{n}}{\| u^{n+1} - u^{n} \|}
    =
    c \dt + \O( \dt^{2} ),
    \quad
    \text{with } c \neq 0,
  \end{equation}
  there is a unique solution $\gamma = 1 + \O(\dt^{p-1})$ and the
  relaxation method satisfies
  $u^{n+1}_\gamma = u(t^{n+1}_\gamma) + \O(\dt^{p+1})$.
\end{theorem}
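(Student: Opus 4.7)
The plan is to follow the standard relaxation argument: reduce the determination of $\gamma$ to a one-dimensional root-finding problem and invoke the implicit function theorem in the parameter $\dt$. Define the scalar residual
\begin{equation*}
  F(\gamma, \dt) := \eta\bigl(u^n + \gamma(u^{n+1} - u^n)\bigr) - \eta(u^n) - \gamma\bigl(\etanew - \eta(u^n)\bigr),
\end{equation*}
so that the relaxation condition reads $F(\gamma, \dt) = 0$. Since $\gamma = 0$ is always a trivial root, the task is to locate a second root close to $\gamma = 1$ and to control its distance from unity as $\dt \to 0$.

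The argument then rests on two quantitative estimates near $\gamma = 1$. First, $F(1, \dt) = \eta(u^{n+1}) - \etanew$, and a Taylor expansion of $\eta$ combined with $u^{n+1} = u(t^{n+1}) + \O(\dt^{p+1})$ and the assumption $\etanew = \eta(u(t^{n+1})) + \O(\dt^{p+1})$ immediately gives $F(1, \dt) = \O(\dt^{p+1})$. Second, the $\gamma$-derivative
\begin{equation*}
  \partial_\gamma F(\gamma, \dt) = \eta'\bigl(u^n + \gamma(u^{n+1} - u^n)\bigr)(u^{n+1} - u^n) - \bigl(\etanew - \eta(u^n)\bigr)
\end{equation*}
evaluated at $\gamma = 1$ has to be shown to be of a definite nonzero order in $\dt$. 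Here I would expand both summands around $u(t^n)$: a short computation shows that each carries the same leading $\O(\dt)$ contribution $\dt\,\eta'(u(t^n)) f(u(t^n))$, so those terms cancel. The non-degeneracy hypothesis \eqref{eq:entropy-non-degenerate}, together with $\|u^{n+1} - u^n\| = \Theta(\dt)$, then guarantees that the surviving $\O(\dt^2)$ remainder has a nonzero leading coefficient, giving $\partial_\gamma F(1, \dt) = \Theta(\dt^2)$. Continuity transfers the lower bound to a small neighborhood of $\gamma = 1$ for $\dt$ sufficiently small.

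With these bounds in hand, the implicit function theorem (equivalently, a single Newton correction from $\gamma = 1$) produces a unique root $\gamma = 1 + \delta$ with
\begin{equation*}
  \delta = -\frac{F(1, \dt)}{\partial_\gamma F(1, \dt)} + \O(\delta^2) = \O(\dt^{p-1}),
\end{equation*}
which is the claimed size. For the error estimate, split
\begin{equation*}
  u^{n+1}_\gamma - u(t^{n+1}_\gamma) = \gamma\bigl(u^{n+1} - u(t^{n+1})\bigr) + \bigl[\gamma u(t^{n+1}) + (1 - \gamma) u(t^n) - u(t^n + \gamma \dt)\bigr].
\end{equation*}
The first summand is $\O(\dt^{p+1})$ because $\gamma = \O(1)$ and the base scheme has order $p$; the bracketed term is a linear-interpolation residual that, after Taylor expanding $u(t^{n+1})$ and $u(t^n + \gamma \dt)$ around $t^n$, reduces to $\tfrac{1}{2}\gamma(1 - \gamma)\dt^2 u''(t^n)$ plus higher-order corrections, all controlled by $|1 - \gamma|\,\dt^2 = \O(\dt^{p+1})$. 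The step I expect to be most delicate is the non-degeneracy of $\partial_\gamma F(1, \dt)$, as it requires careful bookkeeping of the cancellation of the $\O(\dt)$ contributions and then appealing to \eqref{eq:entropy-non-degenerate} to retain a nonzero $\O(\dt^2)$ term.
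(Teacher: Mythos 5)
The paper does not actually prove this theorem---it is quoted from the general relaxation theory of \cite{ranocha2020general} and stated without proof---so there is no in-paper argument to compare against. Your reconstruction follows the standard route of that literature: reduce to the scalar residual $F(\gamma,\dt)$, show $F(1,\dt)=\O(\dt^{p+1})$ and $\partial_\gamma F(1,\dt)=\Theta(\dt^2)$, conclude $\gamma-1=\O(\dt^{p-1})$ by the implicit function theorem, and bound the linear-interpolation residual by $|1-\gamma|\,\dt^2$. The pieces you carry out explicitly (the value $F(1,\dt)$, the Newton correction, the decomposition of $u^{n+1}_\gamma-u(t^{n+1}_\gamma)$ and the factor $\gamma(1-\gamma)$ in the interpolation error) are all correct.

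The step you yourself flag as delicate is, however, the one that is asserted rather than proved, and the assertion does not follow from \eqref{eq:entropy-non-degenerate} alone. Taylor-expanding $\eta$ around $u^{n+1}$ (rather than $u^n$) gives, with $d:=u^{n+1}-u^n$,
\begin{equation*}
  \partial_\gamma F(1,\dt)
  = \eta'(u^{n+1})\,d - \left(\etanew-\eta(u^n)\right)
  = \frac{1}{2}\,\eta''(u^{n+1})(d,d) + \O(\dt^3) + \O(\dt^{p+1}),
\end{equation*}
since $\etanew-\eta(u^n)=\eta(u^{n+1})-\eta(u^n)+\O(\dt^{p+1})$. So the coefficient that must be nonzero is $\eta''(u^n)\bigl(f(u^n),f(u^n)\bigr)$. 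In the conservative case $\etanew=\eta(u^n)$ the second summand vanishes identically, $\partial_\gamma F(1,\dt)$ \emph{equals} $\eta'(u^{n+1})d = c\|d\|\dt+\O(\dt^2\|d\|)=\Theta(\dt^2)$, and \eqref{eq:entropy-non-degenerate} does the job directly---no cancellation bookkeeping is needed. In general, though, \eqref{eq:entropy-non-degenerate} controls only the first summand: carrying out the expansion (the hypothesis forces $\eta'(u^n)f(u^n)=0$, otherwise its left-hand side is $\Theta(1)$) yields $c\,\|f\| = \eta''(f,f)+\tfrac12\,\eta'(u^n)f'(u^n)f(u^n)$, which can be nonzero while $\eta''(f,f)=0$. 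Recovering $\eta''(f,f)\neq 0$ from $c\neq 0$ requires an extra argument---either $\eta'f\equiv 0$ (conservative case) or the sign condition $\eta'f\le 0$ together with $u^n=u(t^n)$, which forces $\tfrac{\dif}{\dif t}(\eta'f)(u(t^n))=0$ and hence $c\|f\|=\tfrac12\eta''(f,f)$. You should either supply that argument or assume $\eta''(u^n)(f,f)\neq 0$ (e.g., strict convexity of $\eta$ plus $f(u^n)\neq 0$) outright; as written, the sentence ``the non-degeneracy hypothesis then guarantees that the surviving $\O(\dt^2)$ remainder has a nonzero leading coefficient'' is the gap.
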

Thus, the relaxation approach keeps at least the order of accuracy of the
baseline method (and can sometimes even lead to superconvergence, e.g.,
\cite{ranocha2020relaxationHamiltonian}). The non-degeneracy condition
\eqref{eq:entropy-non-degenerate} basically requires that the entropy
$\eta$ is ``nonlinear enough'', e.g., strictly convex --- since the relaxation
approach conserves all linear invariants of the baseline method, in contrast
to the orthogonal projection method \cite[Section~IV.4]{hairer2006geometric}.

The relaxation approach can be interpreted as a line search along the
secant connecting $u^{n}$ and $u^{n+1}$. To keep at least the order of
accuracy of the baseline method, relaxation methods require adapting the
time as well. Otherwise, they result in the incremental direction technique
(IDT) of \cite{calvo2006preservation}, losing an order of accuracy for
Runge-Kutta methods \cite{calvo2006preservation,ketcheson2019relaxation}
and sometimes even more for multistep methods \cite{ranocha2020general}.

%

\section{Efficient application of relaxation to multiderivative methods}
\label{sec:multiderivative-relaxation}
  Multiderivative methods can be combined directly with relaxation methods
to conserve an invariant. For more general problems where the entropy is
not necessarily conserved, e.g., dissipative problems, we need to develop
an appropriate estimate $\etanew$ satisfying
\begin{equation}
\label{eq:etanew-integral}
  \etanew
  =
  \eta\bigl( u(t^{n+1}) \bigr)
  + \O(\dt^{p+1})
  =
  \eta\bigl( u(t^{n}) \bigr)
  + \int_{t^{n}}^{t^{n+1}} \underbrace{(\eta' f)\bigl( u(t) \bigr)}_{= \frac{\dif}{\dif t} \eta(u(t))} \dif t
  + \O(\dt^{p+1}).
\end{equation}
For Runge-Kutta methods, an efficient approach is to use the Runge-Kutta
quadrature directly to approximate the integral in \eqref{eq:etanew-integral}
\cite{ketcheson2019relaxation,ranocha2020relaxation}, i.e.,
\begin{equation}
  \etanew = \eta(u^{n}) + \dt \sum_{i=1}^s b_i (\eta' f)(y^i).
\end{equation}
This yields an appropriately dissipative estimate $\etanew \le \eta(u^{n})$
for dissipative problems if the Runge-Kutta weights $b_i$ are non-negative.

However, we cannot use such an approach with multiderivative methods since
we only have information on the first derivative of $\eta \circ u$ in general;
even if we know that $\frac{\dif}{\dif t} \eta\bigl( u(t) \bigr) \le 0$
for a dissipative problem, we typically have no information on the second derivative
$\frac{\dif^2}{\dif t^2} \eta\bigl( u(t) \bigr)$. Thus, we cannot
use the step update formula of a multiderivative method to compute an
appropriate estimate $\etanew$ directly.

\subsection{Entropy estimates (not only) for dissipative problems}
\label{sec:entropy-estimates}

Here, we follow the approach of \cite[Section~3.3]{ranocha2020general},
see also \cite{calvo2010projection}. The idea is to approximate the
integral in \eqref{eq:etanew-integral} by a quadrature rule of order
at least $p$ with nodes $\tau_i \in [t^{n}, t^{n+1}]$ and positive
weights $w_i > 0$, resulting in
\begin{equation}
\label{eq:etanew-quadrature}
  \etanew = \eta(u^{n}) + \dt \sum_{i} w_i (\eta' f)\bigl( y(\tau_i) \bigr),
\end{equation}
where $y(\tau_i)$ are the values of an interpolant $y$ of order at least
$p - 1$ at the quadrature nodes (i.e., a dense output).

\begin{lemma}
Assuming that the quadrature weights $w_i$ are positive and that $\eta'(u) f(u) \leq 0$, the expression \eqref{eq:etanew-quadrature} yields a dissipative estimate for dissipative functionals, i.e.,
\begin{align*}
 \etanew \leq \eta(u^n).
\end{align*}
\end{lemma}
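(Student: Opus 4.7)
The plan is straightforward since the claim is essentially an immediate sign count: every ingredient in the correction term $\etanew - \eta(u^n) = \dt \sum_i w_i (\eta' f)(y(\tau_i))$ has a controlled sign. I would begin by fixing notation, recalling that $\dt > 0$, that by hypothesis $w_i > 0$ for every quadrature node, and that by the dissipativity assumption $(\eta' f)(u) \le 0$ for all admissible states $u$, in particular for each interpolant value $y(\tau_i)$ (the interpolation nodes $\tau_i$ lie in $[t^n, t^{n+1}]$, so $y(\tau_i)$ is a legitimate state to which the pointwise inequality applies).

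Next I would simply observe that the product of a positive weight and a non-positive value is non-positive, so $w_i (\eta' f)(y(\tau_i)) \le 0$ for every $i$, and summing preserves the inequality:
\begin{equation*}
  \sum_i w_i (\eta' f)\bigl( y(\tau_i) \bigr) \le 0.
\end{equation*}
Multiplying by $\dt > 0$ and adding $\eta(u^n)$ to both sides gives exactly $\etanew \le \eta(u^n)$, which is the claim.

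There is no real obstacle here; the only subtlety worth flagging in the write-up is that the pointwise dissipation hypothesis $(\eta' f)(u) \le 0$ must hold at the interpolated values $y(\tau_i)$, not merely along exact trajectories. This is implicit in what it means for $\eta$ to be a dissipative functional in this framework, and the same convention was already used in \eqref{eq:etanew-integral}, so I would simply note it in passing rather than dwell on it. Consequently the proof will be only two or three lines long.
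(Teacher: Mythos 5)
Your argument is correct and is exactly the immediate sign count the paper has in mind: the lemma is stated without an explicit proof precisely because, as you observe, $\dt>0$, $w_i>0$, and $(\eta' f)(y(\tau_i))\le 0$ force the correction term in \eqref{eq:etanew-quadrature} to be non-positive. Your side remark that the pointwise hypothesis $\eta'(u)f(u)\le 0$ must hold at the interpolated states $y(\tau_i)$, not just along exact trajectories, is a fair and accurate reading of the lemma's stated assumption.
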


%
In this work, we use Gauss-Lobatto-Legendre
quadrature to compute the quadrature weights $w_i$ and points $\tau_i$ in \eqref{eq:etanew-quadrature}\footnote{%
We could also use a Gauss-Legendre quadrature with one node less to get
the same order of accuracy of the quadrature. However, since we often have
the values of $f$ at the endpoints, we save an evaluation of the right-hand
side $f$ by using Gauss-Lobatto-Legendre nodes.}. It remains to determine the values $y(\tau_i$). Two approaches have been investigated here:

\paragraph{Hermite-Birkhoff interpolation}
Many multiderivative methods compute already $f(u^{n}) = g^{(1)}(u^{n})$,
$g^{(2)}(u^{n})$, $f(u^{n+1}) = g^{(1)}(u^{n+1})$, and $g^{(2)}(u^{n+1})$.
Thus, we can use them together
with $u^{n}$ and $u^{n+1}$ to compute a quintic Hermite interpolation of
the numerical solution.  For the special case of
four quadrature nodes (which is exact for polynomials of degree $\le 5$),
this results in the nodes
\begin{equation}
  \tau_1 = t^n, \;
  \tau_2 = t^n + \left( \frac{1}{2} - \frac{\sqrt{5}}{10} \right) \dt, \;
  \tau_3 = t^n + \left( \frac{1}{2} + \frac{\sqrt{5}}{10} \right) \dt, \;
  \tau_4 = t^n + \dt = t^{n+1},
\end{equation}
the weights
\begin{equation}
  w_1 = \frac{1}{12}, \;
  w_2 = \frac{5}{12}, \;
  w_3 = \frac{5}{12}, \;
  w_4 = \frac{1}{12},
\end{equation}
and the interpolants
\begin{equation}
\label{eq:entropyestimation}
\begin{aligned}
  y(\tau_1)
  &=
  u^{n},
  \\
  y(\tau_2)
  &=
    \frac{250 + 82 \sqrt{5}}{500} u^{n}
  + \frac{ 60 + 16 \sqrt{5}}{500} \dt f(u^{n})
  + \frac{  5 +    \sqrt{5}}{500} \dt^2 g^{(2)}(u^{n})
  \\
  &\quad
  + \frac{250 - 82 \sqrt{5}}{500} u^{n+1}
  + \frac{-60 + 16 \sqrt{5}}{500} \dt f(u^{n+1})
  + \frac{  5 -    \sqrt{5}}{500} \dt^2 g^{(2)}(u^{n+1}),
  \\
  y(\tau_3)
  &=
    \frac{250 - 82 \sqrt{5}}{500} u^{n}
  + \frac{ 60 - 16 \sqrt{5}}{500} \dt f(u^{n})
  + \frac{  5 -    \sqrt{5}}{500} \dt^2 g^{(2)}(u^{n})
  \\
  &\quad
  + \frac{250 + 82 \sqrt{5}}{500} u^{n+1}
  + \frac{-60 - 16 \sqrt{5}}{500} \dt f(u^{n+1})
  + \frac{  5 +    \sqrt{5}}{500} \dt^2 g^{(2)}(u^{n+1}),
  \\
  y(\tau_4)
  &=
  u^{n+1}.
\end{aligned}
\end{equation}

\paragraph{Continuous Runge-Kutta output}
The class of Hermite-Birkhoff multiderivative Runge-Kutta methods is
collocation-based. In a nutshell --- for more information on collocation schemes, see \cite{hairer2008solving} --- a collocation scheme is defined as a polynomial $y_f$ of order $p-1$ that interpolates the derivative $f\equiv g^{(1)}$ at collocation points $t^n + \dt c_1, \ldots, t^n + \dt c_s$. In the case of the Hermite-Birkhoff methods, interpolation has to be understood in the sense of Hermite interpolation. Then, the RK stages and update are defined as
\begin{alignat}{2}
 \label{eq:collocation}
 y^{i}   &= u^n + \int_{t^n}^{t^n + \dt c_s}  y_f(\tau) \mathrm d \tau, \qquad
 u^{n+1} &\ =\ & u^n + \int_{t^n}^{t^{n+1}} y_f(\tau) \mathrm{d} \tau.
\end{alignat}
Using suitable Lagrange basis functions for Hermite interpolation,
see \cite[Sec.~8.5]{QuarteroniNumA}, these expressions can be
written in the form of a linear combination of the $g^{(k)}$, $1 \leq k \leq m$.
The same idea can be used to produce continuous output, i.e., produce values $y_{\theta} \approx u(t^n + \theta \dt)$ for $\theta \in [0, 1]$ through
\begin{align*}
 y_{\theta} &= u^n + \int_{t^n}^{t^n + \dt \theta}  y_f(\tau) \mathrm d \tau.
\end{align*}
For the fourth-order HB-I2DRK4-2s scheme, this results in the Butcher tableau in Tbl.~\ref{tbl:contHB-I2DRK4-2s}.
The Butcher tableau of the sixth-order HB-I2DRK6-3s scheme can be found in the
appendix, see. Tbl.~\ref{tbl:contHB-I2DRK6-3s}. More Butcher tableaux, also for
more than two derivatives or non-equidistantly spaced collocation points $c_s$,
can be generated with the MATLAB code that can be found
in our reproducibility repository \cite{ranocha2023multiderivativeRepro}.

\begin{table}[h!]
\centering
\caption{Runge-Kutta table of the continuous HB-I2DRK4-2s method. The method is uniformly fourth-order in $0 \leq \theta \leq 1$. As usual, the original Runge-Kutta method is obtained by setting $\theta = 1$. }\label{tbl:contHB-I2DRK4-2s}
\begin{tabular}{c|cc||cc}
$0$ & $0$   & $0$   & $0$   & $0$ \\
$1$ & $1/2$   & $1/2$   & $1/12$ & $-1/12$ \\ \hline
  & $\frac{\theta^4}{2} - \theta^3 + \theta$   & $-\frac{\theta^3(\theta - 2)} 2$
  & $\frac{\theta^2(3\theta^2 - 8\theta + 6)}{12}$ & $\frac{\theta^3(3\theta - 4)}{12}$
\end{tabular}
\end{table}

\begin{remark}
 It can already be mentioned at this point that for the problems we have considered in this work, the difference between using an interpolation-based output or a continuous Runge-Kutta type output is negligible, see Sec.~\ref{sec:numerical_experiments}.
\end{remark}

\section{Stability properties}
\label{sec:stability}
Applying a multiderivative Runge-Kutta method to the scalar linear
test problem
\begin{equation}
  u'(t) = \lambda u(t), \qquad \lambda \in \C,
\end{equation}
results in the iteration
\begin{equation}
  u^{n+1} = R(z) u^{n}, \qquad z = \lambda \dt,
\end{equation}
where $R$ is the stability function of the method. Analogously to classical
Runge-Kutta methods, introducing the relaxation parameter $\gamma$ in
\begin{equation}
  u^{n+1}_\gamma = u^{n} + \gamma \left( u^{n+1} - u^{n} \right)
\end{equation}
can only increase the size of the stability domain if $\gamma \le 1$
(cf.\ Theorem~3.1 of \cite{ketcheson2019relaxation}).

\begin{theorem}
\label{thm:stability-domain-monotone}
  Consider the stability function
  \begin{equation}
    R_\gamma(z) = 1 + \gamma \left( R(z) - 1 \right)
  \end{equation}
  of a relaxed update. For $0 \le \gamma_1 \le \gamma_2$, the stability
  domain of $R_{\gamma_1}$ includes the one of $R_{\gamma_2}$.
\end{theorem}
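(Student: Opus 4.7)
The plan is to work directly from the definition of the stability domain $S_\gamma = \{z \in \C : |R_\gamma(z)| \le 1\}$ and show the set inclusion $S_{\gamma_2} \subseteq S_{\gamma_1}$ pointwise. The key observation is that $R_{\gamma_1}(z)$ can be rewritten as a convex combination of $1$ and $R_{\gamma_2}(z)$, after which the triangle inequality closes the argument.

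First, I would dispose of the degenerate case $\gamma_2 = 0$: then $\gamma_1 = 0$ as well, and $R_0(z) \equiv 1$, so $S_0 = \C$ and the inclusion is trivial. Assume therefore $\gamma_2 > 0$ and set $t = \gamma_1/\gamma_2 \in [0,1]$. A short calculation then gives
\begin{equation*}
  R_{\gamma_1}(z)
  = 1 + \gamma_1 \bigl( R(z) - 1 \bigr)
  = 1 + t \gamma_2 \bigl( R(z) - 1 \bigr)
  = (1 - t) \cdot 1 + t \cdot R_{\gamma_2}(z).
\end{equation*}
Hence for any $z$ with $|R_{\gamma_2}(z)| \le 1$, the triangle inequality yields
\begin{equation*}
  |R_{\gamma_1}(z)|
  \le (1 - t) + t \, |R_{\gamma_2}(z)|
  \le (1 - t) + t = 1,
\end{equation*}
which shows $z \in S_{\gamma_1}$ and therefore $S_{\gamma_2} \subseteq S_{\gamma_1}$.

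There is essentially no hard step here; the only subtlety is the handling of $\gamma_2 = 0$, which has to be singled out to make the quotient $\gamma_1/\gamma_2$ well-defined. Geometrically the same statement can be phrased by noting that $|1 + \gamma w| \le 1$ describes a closed disk in the $w = R(z) - 1$ plane centered at $-1/\gamma$ with radius $1/\gamma$, a family of disks that all pass through the origin and shrinks monotonically as $\gamma$ increases. I would probably mention this geometric picture as a remark but rely on the algebraic convex-combination proof above for the actual argument, since it is cleanest and requires no case distinction on the sign of $\Re R(z)$ or the like.
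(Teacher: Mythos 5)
Your proof is correct and is essentially the paper's argument: the paper reduces to showing $|1+\gamma_2 w|\le 1 \Rightarrow |1+\gamma_1 w|\le 1$ for $w = R(z)-1$, and your convex-combination identity $1+\gamma_1 w = (1-t)+t(1+\gamma_2 w)$ with $t=\gamma_1/\gamma_2$ is precisely the justification the paper leaves implicit. Your explicit handling of the degenerate case $\gamma_2=0$ and the triangle-inequality step are welcome details, but not a different route.
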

\begin{proof}
  Let $w := R(z) - 1$. Then, $|R_{\gamma_2}(z)| = |1 + \gamma_2 w| \le 1$
  implies $|R_{\gamma_1}(z)| = |1 + \gamma_1 w| \le 1$ for
  $0 \le \gamma_1 \le \gamma_2$.
\end{proof}

\begin{corollary}
  Given $\gamma \in [0, 1]$, the stability domain of the relaxed update
  \begin{equation}
    u^{n+1}_\gamma = u^{n} + \gamma \left( u^{n+1} - u^{n} \right)
  \end{equation}
  is a superset of the one of the baseline method.
\end{corollary}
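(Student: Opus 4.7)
The plan is to invoke Theorem~\ref{thm:stability-domain-monotone} directly, with the observation that the baseline method corresponds precisely to the choice $\gamma = 1$ in the family $R_\gamma$. Indeed, plugging $\gamma = 1$ into $R_\gamma(z) = 1 + \gamma(R(z) - 1)$ yields $R_1(z) = R(z)$, so the baseline stability domain is exactly $\{ z \in \C : |R_1(z)| \le 1 \}$.

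Given $\gamma \in [0,1]$, I would then set $\gamma_1 := \gamma$ and $\gamma_2 := 1$. Since $0 \le \gamma_1 \le \gamma_2$, Theorem~\ref{thm:stability-domain-monotone} yields that the stability domain of $R_{\gamma_1} = R_\gamma$ includes the stability domain of $R_{\gamma_2} = R$, which is precisely the assertion.

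There is no real obstacle here: the corollary is just the specialization of the monotonicity theorem to the interval endpoint $\gamma_2 = 1$. The only thing worth spelling out explicitly in the writeup is the identification $R_1 = R$, so that the reader immediately sees why ``baseline'' appears in the statement even though the monotonicity theorem is phrased purely in terms of the relaxed family.
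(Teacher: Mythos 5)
Your argument is correct and is exactly the intended one: the paper states this corollary without proof as an immediate consequence of Theorem~\ref{thm:stability-domain-monotone}, relying on precisely the specialization $\gamma_1 = \gamma$, $\gamma_2 = 1$ with $R_1 = R$. Nothing is missing.
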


\begin{corollary}
  If the baseline method is $A$-stable, then its relaxed version is $A$-stable if $\gamma \leq 1$.
\end{corollary}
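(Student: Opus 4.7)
The plan is to obtain this corollary as an immediate specialization of Theorem~\ref{thm:stability-domain-monotone} (equivalently, of the preceding corollary). Recall that $A$-stability of the baseline method means that its stability domain $\{z \in \C : |R(z)| \le 1\}$ contains the closed left half-plane $\C^{-} := \{z \in \C : \Re(z) \le 0\}$. I would then apply Theorem~\ref{thm:stability-domain-monotone} with $\gamma_1 = \gamma$ and $\gamma_2 = 1$, which yields that the stability domain of $R_\gamma$ is a superset of the stability domain of $R = R_1$. Since the latter contains $\C^{-}$ by the $A$-stability hypothesis on the baseline, so does the former, and this is exactly the statement that the relaxed method is $A$-stable.

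The only point that needs a moment of care is the tacit lower bound on $\gamma$: Theorem~\ref{thm:stability-domain-monotone} requires $0 \le \gamma_1 \le \gamma_2$, so to invoke it one must know $\gamma \ge 0$. This can either be sharpened in the statement to $\gamma \in [0, 1]$, or justified within the relaxation framework itself, where $\gamma = 1 + \O(\dt^{p-1})$ for sufficiently small $\dt$ and hence is automatically non-negative near the design regime. Beyond addressing this implicit positivity assumption, there is no real obstacle; the result is a one-line consequence of the monotonicity of the stability domains of the family $R_\gamma$ in $\gamma$, applied to the particular invariant region $\C^{-}$ that characterizes $A$-stability.
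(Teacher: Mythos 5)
Your argument is correct and is exactly the (implicit) reasoning in the paper: the corollary follows immediately from Theorem~\ref{thm:stability-domain-monotone} with $\gamma_1=\gamma$, $\gamma_2=1$, since the stability domain of $R_\gamma$ then contains that of $R$, which contains the closed left half-plane by $A$-stability of the baseline. Your remark about the tacit lower bound $\gamma\ge 0$ is also apt, as the paper's preceding corollary indeed states the hypothesis as $\gamma\in[0,1]$.
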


For $\gamma> 1$, we can expect the stability regions to become smaller.
For example, consider the implicit midpoint (Runge-Kutta) method
\begin{equation}
  u^{n+1} = u^{n} + \dt f\left( \frac{u^{n+1} + u^{n}}{2} \right)
\end{equation}
with stability function
\begin{equation}
  R(z) = \frac{1 + z / 2}{1 - z / 2}.
\end{equation}
Then, $z \to \infty$ yields
\begin{equation}
  R_\gamma(z)
  =
  1 + \gamma \left( R(z) - 1 \right)
  \to
  1 - 2 \gamma.
\end{equation}
Thus, $A$-stability is lost immediately for $\gamma > 1$.
For $A$-stable methods with a stability domain strictly
larger than the left half of the complex plane, we can expect to find an
upper bound on the relaxation parameter $\gamma$ ensuring $A$-stability.
A necessary condition is given in the following theorem.

\begin{theorem}
  \label{thm:L-stable-upper-bound-for-A-stability}
  For an $L$-stable method, $\gamma \le 2$ is necessary for $A$-stability of
  the relaxed update.
\end{theorem}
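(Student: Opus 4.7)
The plan is to exploit the defining feature of $L$-stability, namely that $R(z) \to 0$ as $z \to \infty$ in the left half-plane, and to evaluate the relaxed stability function in that limit. Concretely, since
\begin{equation*}
  R_\gamma(z) = 1 + \gamma\bigl( R(z) - 1 \bigr),
\end{equation*}
taking $z \to -\infty$ along the negative real axis (which lies in the closed left half-plane) gives $R_\gamma(z) \to 1 - \gamma$. For the relaxed method to be $A$-stable we must have $|R_\gamma(z)| \le 1$ on the entire closed left half-plane, so by continuity and the limit above we need $|1 - \gamma| \le 1$, i.e., $0 \le \gamma \le 2$. In particular, $\gamma \le 2$ is necessary.

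The argument is almost a one-liner; the only subtlety is that strictly speaking one needs a sequence $z_n$ in the closed left half-plane along which $R(z_n) \to 0$, which is guaranteed by the $L$-stability assumption (the limit along the negative real axis suffices and is the standard formulation). No additional structure of the baseline method beyond this limit property is required, so the result holds uniformly for every $L$-stable multiderivative Runge-Kutta method. The main obstacle, if any, is only conceptual: distinguishing that this condition is necessary but not sufficient, which matches the statement of the theorem.
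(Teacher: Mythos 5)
Your argument is essentially identical to the paper's proof: both evaluate $R_\gamma$ in the limit $z \to \infty$ within the left half-plane, use $L$-stability to conclude $R_\gamma(z) \to 1 - \gamma$, and deduce that $A$-stability forces $|1 - \gamma| \le 1$, hence $\gamma \le 2$. The proposal is correct and requires no changes.
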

\begin{proof}
  For an $L$-stable method, $R(\infty) = 0$. Thus, $z \to \infty$ yields
  \begin{equation}
    |R_\gamma(z)|
    =
    |1 + \gamma \left( R(z) - 1 \right)|
    \to
    |1 - \gamma|.
  \end{equation}
  Hence, $\gamma > 2$ implies $|R_\gamma(\infty)| > 1$.
\end{proof}

The constraint $\gamma \le 2$ is not prohibitive since
$\gamma = 1 + \O(\dt^{p-1})$ for a $p$-th order baseline method.

\begin{lemma}
  The upper bound $\gamma \le 2$ of
  Theorem~\ref{thm:L-stable-upper-bound-for-A-stability} is sharp for the
  implicit Euler method.
\end{lemma}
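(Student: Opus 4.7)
The plan is to compute the relaxed stability function $R_\gamma$ for implicit Euler explicitly, identify it with a known $A$-stable scheme at $\gamma = 2$, and then invoke Theorem~\ref{thm:L-stable-upper-bound-for-A-stability} to rule out any larger $\gamma$. Since implicit Euler has stability function $R(z) = 1/(1-z)$, a one-line manipulation gives
\begin{equation*}
  R_\gamma(z)
  = 1 + \gamma\bigl( R(z) - 1 \bigr)
  = \frac{1 + (\gamma - 1) z}{1 - z}.
\end{equation*}
Setting $\gamma = 2$ yields $R_2(z) = (1+z)/(1-z)$, which is the $(1,1)$-Padé approximation of the exponential, i.e., the stability function of the implicit midpoint rule.

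Next I would verify $A$-stability of $R_2$ directly: the inequality $|1+z| \le |1-z|$ is equivalent to $\Re(z) \le 0$, so $|R_2(z)| \le 1$ holds precisely on the closed left half-plane. This establishes that $\gamma = 2$ is attainable with $A$-stability preserved.

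Finally, since implicit Euler is $L$-stable, Theorem~\ref{thm:L-stable-upper-bound-for-A-stability} already shows that any $\gamma > 2$ violates $A$-stability, because $|R_\gamma(z)| \to |1 - \gamma| > 1$ as $z \to \infty$ along the negative real axis. Combining both observations yields sharpness. I do not anticipate a genuine obstacle here: the argument is essentially a two-line calculation, and the only care needed is to state clearly that sharpness means \emph{both} that $\gamma = 2$ still works \emph{and} that every $\gamma > 2$ fails, the latter being an immediate consequence of the preceding theorem.
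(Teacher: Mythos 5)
Your proof is correct and follows essentially the same route as the paper: compute $R_\gamma$, observe $R_2(z)=(1+z)/(1-z)$, and establish its $A$-stability --- the paper does this by citing the implicit midpoint rule together with the monotonicity theorem, while you verify $|1+z|\le|1-z|\iff\Re(z)\le 0$ directly and handle $\gamma>2$ via Theorem~\ref{thm:L-stable-upper-bound-for-A-stability}, which is equally valid. One cosmetic slip: $(1+z)/(1-z)$ is not literally the $(1,1)$-Pad\'e approximant of $\e^z$ (that is $(1+z/2)/(1-z/2)$, the implicit midpoint stability function) but rather that function after the rescaling $z\mapsto 2z$; this does not affect your argument, since your direct half-plane verification carries the proof on its own.
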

\begin{proof}
  We have
  \begin{equation}
    R_\gamma(z)
    =
    1 + \gamma \left( R(z) - 1 \right)
    =
    1 + \gamma \left( \frac{1}{1 - z} - 1 \right)
    =
    \frac{1 - z + \gamma z}{1 - z}.
  \end{equation}
  For $\gamma = 2$, this becomes
  \begin{equation}
    R_2(z) = \frac{1 + z}{1 - z}.
  \end{equation}
  Up to a scaling of $z$ by a factor of two, this is the stability function
  of the implicit midpoint method. Monotonicity of the stability domain of
  the relaxed update (Theorem~\ref{thm:stability-domain-monotone}) proves the
  claim.
\end{proof}

Unfortunately, strict $A$-stability is lost for the relaxed update with fixed
relaxation parameter $\gamma$ for some methods already for values of
$\gamma$ close to unity. Nevertheless, $A(\alpha)$-stability is often preserved
with reasonable values $\alpha \approx 90\degree$ as shown in
Figures~\ref{fig:stability-angles-1} and
\ref{fig:stability-angles-2} for some methods.

\begin{figure}[htbp]
\begin{subfigure}{0.49\textwidth}
    \ifthenelse{\boolean{compilefromscratch}}
    {
      \tikzsetnextfilename{stab_HBI2DRK3}
      \input{tikz_source/stab_HBI2DRK3.tex}
    }
    {
      \includegraphics{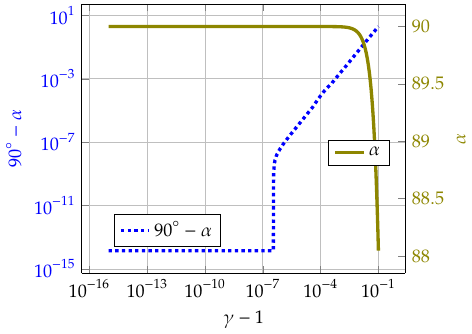}
    }
  \caption{Third-order, $L$-stable method of \cite{jaust2016implicit}.}
\end{subfigure}
\begin{subfigure}{0.49\textwidth}
    \ifthenelse{\boolean{compilefromscratch}}
    {
      \tikzsetnextfilename{stab_HBI2DRK4}
      \input{tikz_source/stab_HBI2DRK4.tex}
    }
    {
      \includegraphics{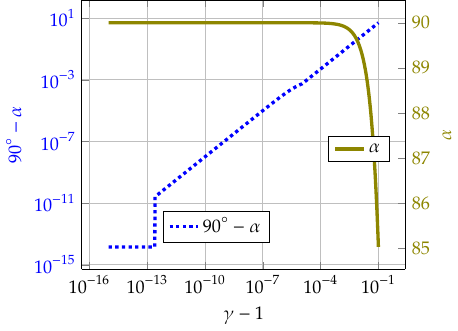}
    }
  \caption{Fourth-order, $A$-stable method of \cite[Example~1]{schutz2022parallel}.}
\end{subfigure}

\begin{subfigure}{0.49\textwidth}
    \ifthenelse{\boolean{compilefromscratch}}
    {
      \tikzsetnextfilename{stab_HBI2DRK6}
      \input{tikz_source/stab_HBI2DRK6.tex}
    }
    {
      \includegraphics{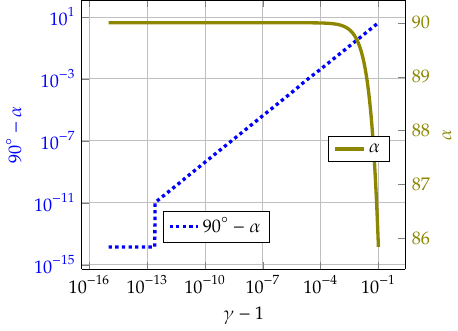}
    }
  \caption{HB-I2DRK6-3s \cite{schutz2017implicit}.}
\end{subfigure}
\begin{subfigure}{0.49\textwidth}
   \ifthenelse{\boolean{compilefromscratch}}
    {
      \tikzsetnextfilename{stab_HBI2DRK8}
      \input{tikz_source/stab_HBI2DRK8.tex}
    }
    {
      \includegraphics{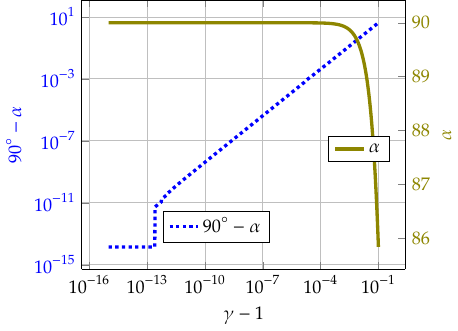}
    }
  \caption{HB-I2DRK8-4s \cite{schutz2022parallel}.}
\end{subfigure}

\begin{subfigure}{0.49\textwidth}
   \ifthenelse{\boolean{compilefromscratch}}
    {
      \tikzsetnextfilename{stab_HBI3DRK9}
      \input{tikz_source/stab_HBI3DRK9.tex}
    }
    {
      \includegraphics{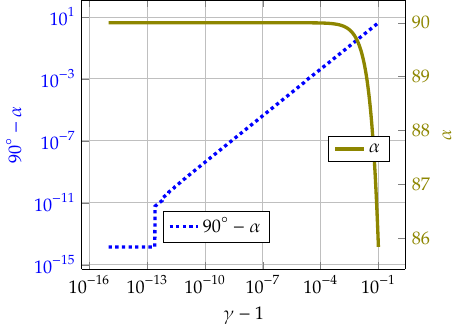}
    }
  \caption{HB-I3DRK9-3s.}
\end{subfigure}
\begin{subfigure}{0.49\textwidth}
  \ifthenelse{\boolean{compilefromscratch}}
    {
      \tikzsetnextfilename{stab_HBI3DRK12}
      \input{tikz_source/stab_HBI3DRK12.tex}
    }
    {
      \includegraphics{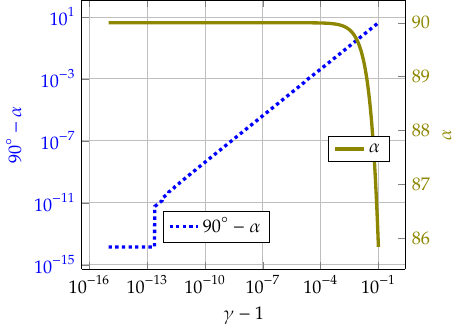}
    }
    \caption{HB-I3DRK12-4s.}
\end{subfigure}
\caption{Angles of $A(\alpha)$-stability for the relaxed update with
           fixed relaxation parameter $\gamma > 1$ for some implicit
           multiderivative methods.}
\label{fig:stability-angles-1}
\end{figure}

\begin{figure}[htb]
\begin{subfigure}{0.49\textwidth}
  \ifthenelse{\boolean{compilefromscratch}}
    {
      \tikzsetnextfilename{stab_HBI4DRK8}
      \input{tikz_source/stab_HBI4DRK8.tex}
    }
    {
      \includegraphics{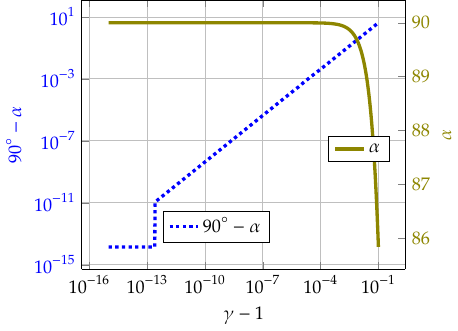}
    }
  \caption{HB-I4DRK8-2s.}
\end{subfigure}
\begin{subfigure}{0.49\textwidth}
  \ifthenelse{\boolean{compilefromscratch}}
    {
      \tikzsetnextfilename{stab_HBI4DRK12}
      \input{tikz_source/stab_HBI4DRK12.tex}
    }
    {
      \includegraphics{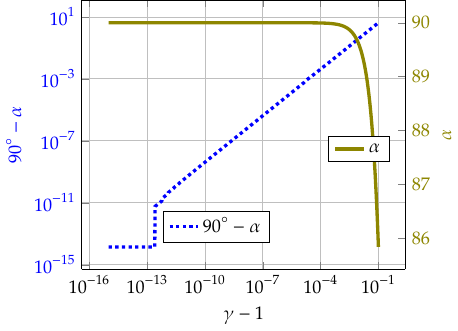}
    }
  \caption{HB-I4DRK12-3s.}
\end{subfigure}

\begin{subfigure}{0.49\textwidth}
  \ifthenelse{\boolean{compilefromscratch}}
    {
      \tikzsetnextfilename{stab_SSP3}
      \input{tikz_source/stab_SSP3.tex}
    }
    {
      \includegraphics{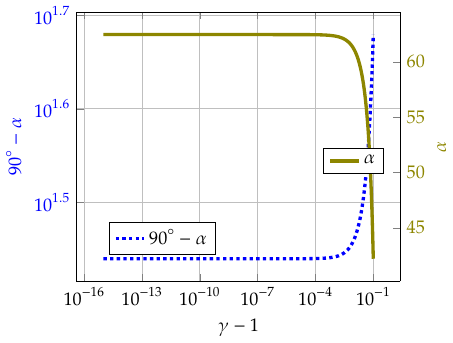}
    }
  \caption{SSP-I2DRK3-2s of \cite{gottlieb2022high}.}
\end{subfigure}
\begin{subfigure}{0.49\textwidth}
  \ifthenelse{\boolean{compilefromscratch}}
    {
      \tikzsetnextfilename{stab_SSP4}
      \input{tikz_source/stab_SSP4.tex}
    }
    {
      \includegraphics{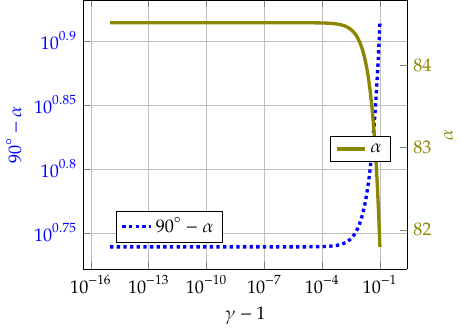}
    }
  \caption{SSP-I2DRK4-5s of \cite{gottlieb2022high}.}
\end{subfigure}
\caption{Angles of $A(\alpha)$-stability for the relaxed update with
           fixed relaxation parameter $\gamma > 1$ for some implicit
           multiderivative methods.
           }
  \label{fig:stability-angles-2}
\end{figure}

\begin{remark}
  \label{rem:L-stability}
  Strictly speaking, $L$-stability is lost for the relaxed update if
  $\gamma \ne 1$ since
  \begin{equation}
    R_\gamma(\infty)
    =
    1 + \gamma \left( R(\infty) - 1 \right)
    =
    1 - \gamma.
  \end{equation}
  However, $1 - \gamma = \O(\dt^{p-1})$ for a $p$-th order method. Thus,
  the deviation is just small and we can still expect to get enough
  (desired) damping.
\end{remark}

\begin{remark}
  \label{rem:SSP}
  Strong stability preserving (SSP) methods preserve convex stability
  properties satisfied by the explicit Euler method
  \cite{gottlieb2011strong}. By convexity, relaxed updates of SSP methods
  are still SSP if $\gamma \in [0, 1]$. There are multiple approaches to
  extend SSP methods to multiderivative schemes, including the recent
  approach of \cite{gottlieb2022high} providing implicit SSP methods without
  time step constraints. We cannot expect to preserve the SSP property
  unconditionally for $\gamma > 1$. If relaxation is used to impose an
  inequality for a convex functional $\eta$, it can thus be a reasonable choice
  to restrict the relaxation parameter $\gamma$ to $[0, 1]$, i.e., to enable
  relaxation only if the baseline method is not already $\eta$-dissipative.
\end{remark}

\section{Numerical experiments}
\label{sec:numerical_experiments}
We will demonstrate the behavior of the newly developed multiderivative
relaxation methods in this section.
We implemented some two-derivative Runge-Kutta methods in Julia
\cite{bezanson2017julia}. For them, we use automatic/algorithmic differentiation
(AD) with ForwardDiff.jl \cite{revels2016forward} to compute the second derivative
$u'' = g^{(2)}(u) = (f' f)(u)$ in a Jacobian-free manner.
We apply discontinuous Galerkin (DG) semidiscretizations of conservation
laws implemented in Trixi.jl
\cite{ranocha2022adaptive,schlottkelakemper2021purely} for simulations of
the compressible Euler equations.
We discretize the nonlinear dispersive Benjamin-Bona-Mahony equation using methods from
SummationByPartsOperators.jl \cite{ranocha2021sbp}, which are based on
FFTW.jl \cite{frigo2005design} for Fourier methods.

Please note that we use a classical method-of-lines approach for the PDE
discretizations. Thus, we only discretize the spatial operator of the PDE itself
and use AD to compute the second time derivative as $u'' = (f'f)(u)$. An
alternative would be to use a Lax-Wendroff approach, requiring the spatial
discretization of terms for both the first and the second time derivative
simultaneously \cite{lax1960systems,seal2014high,tsai2014two,babbar2022lax}.

The Julia source code to reproduce some numerical experiments is available
in our reproducibility repository \cite{ranocha2023multiderivativeRepro}.
The remaining numerical experiments are implemented in MATLAB \cite{matlabR2022b}
based on a closed-source inhouse research code developed at Hasselt University
using symbolic tools to compute higher derivatives of $u$.

The numerical experiments are roughly ordered by increasing complexity of
the ODE systems. First, we verify the convergence properties for a
nonlinear oscillator with quadratic entropy functional and two ODEs
with non-quadratic entropies. Afterwards, we consider PDE discretizations:
the 3D compressible Euler equations of fluid dynamics to demonstrate the
robustness of our approach as well as two nonlinear dispersive wave equations
where the conservation of functionals is crucial to obtain better qualitative
and quantitative results.

\subsection{Convergence experiment I: a nonlinear oscillator}
First, we demonstrate the convergence properties of the multiderivative
methods with and without relaxation. We choose the entropy-conservative
nonlinear oscillator
\begin{equation}
\label{eq:nonlinear_osc}
  u'(t) = \frac{1}{\| u(t) \|^2}
  \begin{pmatrix} -u_2(t) \\ u_1(t) \end{pmatrix},
  \text{ for } t \in (0, T),
  \qquad
  u(0) = \begin{pmatrix} 1 \\ 0 \end{pmatrix},
\end{equation}
of \cite{ranocha2021strong,ranocha2020energy,ketcheson2022computing} and
its dissipative counterpart
\begin{equation}
\label{eq:damped_nonlinear_osc}
  u'(t) = \frac{1}{\| u(t) \|^2}
  \begin{pmatrix} -u_2(t) \\ u_1(t) \end{pmatrix}
  - \epsilon \begin{pmatrix} u_1(t) \\ u_2(t) \end{pmatrix},
  \text{ for } t \in (0, T),
  \qquad
  u(0) = \begin{pmatrix} 1 \\ 0 \end{pmatrix},
\end{equation}
with analytical solution
\begin{equation}
  u(t) = r(t) \begin{pmatrix} \cos \phi(t) \\ \sin \phi(t) \end{pmatrix},
  \qquad
  r(t) = \exp(-\epsilon t),
  \;
  \phi(t) =
  \begin{cases}
    t, & \epsilon = 0, \\
    \frac{\exp(2 \epsilon t) - 1}{2 \epsilon}, & \epsilon \ne 0.
  \end{cases}
\end{equation}
For the entropy-dissipative nonlinear oscillator, we choose the damping
coefficient $\epsilon = 10^{-2}$.
In both cases, we choose the entropy $\eta(u) = \| u \|^2$ with the
standard Euclidean inner product norm $\| \cdot \|$.

\begin{figure}[ht]
 \centering
     \ifthenelse{\boolean{compilefromscratch}}
    {
      \tikzsetnextfilename{osc_cons_ct}
      \input{tikz_source/osc_cons_ct.tex}
    }
    {
      \includegraphics{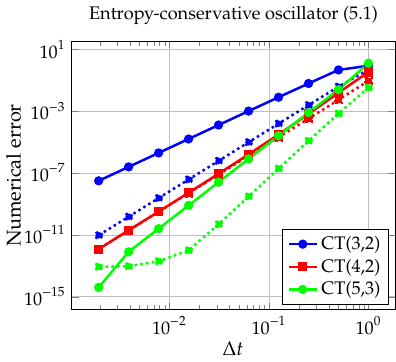}
    }
\hspace{1cm}
     \ifthenelse{\boolean{compilefromscratch}}
    {
      \tikzsetnextfilename{osc_diss_ct}
      \input{tikz_source/osc_diss_ct.tex}
    }
    {
      \includegraphics{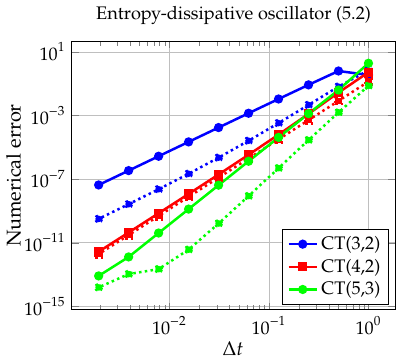}
    }
\caption{
    Convergence results for the nonlinear oscillators using the explicit third, fourth and fifth order methods of \cite{chan2010explicit}, see also Sec.~\ref{sec:ctbutcher}.
    Dotted lines correspond to the relaxed schemes, straight lines to the baseline schemes. }
  \label{fig:convergence-CT}
\end{figure}

The errors at the final time $T = 10$ of the three explicit two-derivative methods CT(3,2), CT(4,2) and CT(5,3) from \cite{chan2010explicit}, see also Sec.~\ref{sec:ctbutcher}, are shown in Figure~\ref{fig:convergence-CT}.
The entropy estimation $\eta^{\text{new}}$ has been computed using the four-point Gauss-Lobatto rule combined with the quintic polynomial outlined in \eqref{eq:entropyestimation}. Note that the CT-methods are not collocation-based, so it is not obvious how to choose a continuous output.

We can clearly observe the expected orders of convergence of 3, 4, and 5 for the baseline schemes (straight lines).
For the entropy-conservative case, the odd order methods show an increase in order. This phenomenon is well-known, and has been analyzed in \cite{ranocha2020relaxationHamiltonian}.
In any case, also for the entropy-dissipative one, the relaxed methods of odd order behave significantly better than the baseline methods in that the error coefficients seem to be significantly smaller. For the fourth-order scheme, this difference is less pronounced; it is only visible for large timesteps.

Having considered explicit schemes, we now turn our attention to the class of implicit schemes. In particular, we consider some collocation-based schemes, see Sec.~\ref{sec:hbbutcher}, and a third-order SSP scheme (SSP-I2DRK3-2s) recently introduced in \cite{gottlieb2022high}, see also Eq.~\eqref{eq:SSP3}. Numerical results have been reported in Fig.~\ref{fig:implicit_oscillator}.
Again, design orders of accuracy are visible for the baseline schemes in all cases. The third-order SSP scheme profits a lot from relaxation, which was to be expected, as the order is increased by one. For the other two methods, the difference between relaxed and baseline scheme is less pronounced.

For the entropy-dissipative case and the collocation-based schemes, we have computed $\eta^{\text{new}}$ in two different fashions as explained in Sec.~\ref{sec:entropy-estimates}. An estimate is obtained through an appropriate Gauss-Lobatto quadrature combined with a \emph{polynomial interpolation} (dotted) or a \emph{continuous output} (dashed). For the sixth-order scheme, a Gauss-Lobatto quadrature with five points is used to obtain the appropriate order. As the SSP scheme does not come with a continuous output, there, only an estimate via interpolation was possible to obtain.
It is very difficult to distinguish the dotted and the dashed lines in Fig.~\ref{fig:implicit_oscillator} (right). While they are not the same, they lie practically on top of each other, from which one can conclude that the results are not very sensitive to the choice of entropy estimator.

\begin{figure} [ht]
    \centering
 \ifthenelse{\boolean{compilefromscratch}}
    {
      \tikzsetnextfilename{osc_cons_impl}
      \input{tikz_source/osc_cons_impl.tex}
    }
    {
      \includegraphics{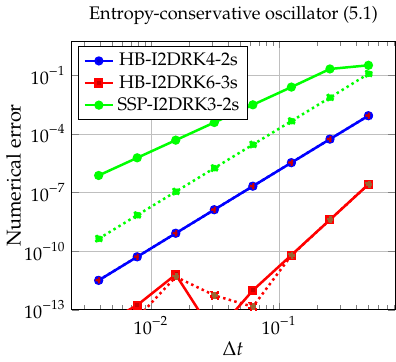}
    }
 \ifthenelse{\boolean{compilefromscratch}}
    {
      \tikzsetnextfilename{osc_diss_impl}
      \input{tikz_source/osc_diss_impl.tex}
    }
    {
      \includegraphics{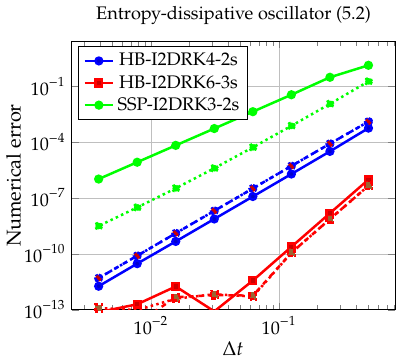}
    }
\caption{
    {Convergence results for the nonlinear oscillators using several implicit schemes, see also Sec.~\ref{sec:sspbutcher} and Sec.~\ref{sec:hbbutcher}. Straight lines correspond to the baseline schemes, while dotted and dashed lines use relaxation.
    For the entropy-conservative case, only dotted lines are plotted, as $\eta^{\text{new}} \equiv \eta(u^n)$ is trivial to obtain. For the entropy-dissipative case, estimating $\eta^{\text{new}}$ is an important part of the algorithm, see Sec.~\ref{sec:entropy-estimates}.
    Dotted means that the estimate of $\eta^{\text{new}}$ has been obtained using a Hermite-Birkhoff interpolation of $u$, while dashed means that the estimate has been obtained using the continuous Runge-Kutta output. Please note that the latter is not possible for the SSP scheme.}\label{fig:implicit_oscillator}
}
\end{figure}

\paragraph{Error growth in time for nonlinear oscillators}

	\begin{figure} [ht]
        \centering
     \ifthenelse{\boolean{compilefromscratch}}
    {
      \tikzsetnextfilename{osc_cons_errorgrowth_ct}
      \input{tikz_source/osc_cons_errorgrowth_ct.tex}
      \tikzsetnextfilename{osc_cons_entropy_ct}
      \input{tikz_source/osc_cons_entropy_ct.tex}

    }
    {
      \includegraphics{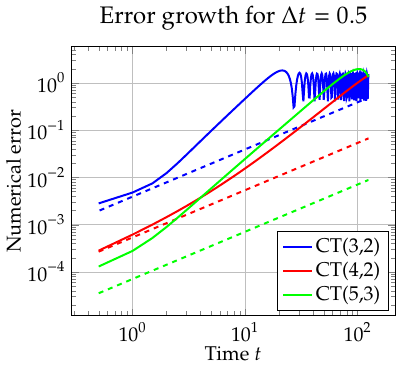}
      \includegraphics{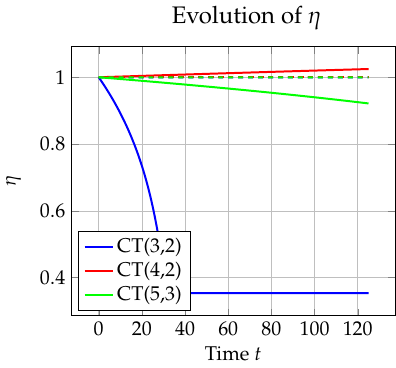}
    }

     \ifthenelse{\boolean{compilefromscratch}}
    {
      \tikzsetnextfilename{osc_diss_errorgrowth_ct}
      \input{tikz_source/osc_diss_errorgrowth_ct.tex}
      \tikzsetnextfilename{osc_diss_entropy_ct}
      \input{tikz_source/osc_diss_entropy_ct.tex}

    }
    {
      \includegraphics{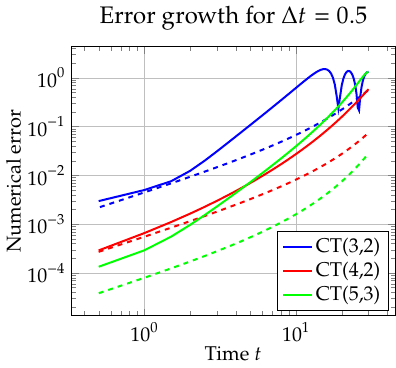}
      \includegraphics{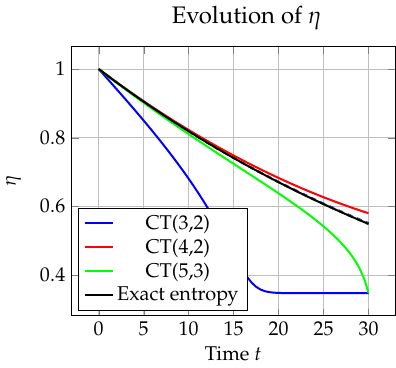}
    }
        \caption{Temporal evolution of the numerical error and the functional $\eta$ for the nonlinear oscillators using the explicit third, fourth and fifth order methods of \cite{chan2010explicit}, see also Sec.~\ref{sec:ctbutcher}. Top: conservative case \eqref{eq:nonlinear_osc}, bottom: dissipative case \eqref{eq:damped_nonlinear_osc}.
        Dashed lines denote schemes that use relaxation.
        }\label{fig:nonlinear_osc-CT}
    \end{figure}

Here, we take a closer look at the behavior of the error of the three selected multiderivative Runge-Kutta methods by Chan and Tsai \cite{chan2010explicit} for the
nonlinear oscillators described in the previous subsection.
The final time is set to $T=125$ (entropy-conservative) and $T=30$ (entropy-dissipative), and the rather large timestep of $\dt = 0.5$ is chosen.
The results for the entropy-conservative nonlinear oscillator \eqref{eq:nonlinear_osc}
are shown in Figure~\ref{fig:nonlinear_osc-CT} on the top. Clearly, the
relaxation approach conserves the entropy (up to machine accuracy)
while the baseline scheme results in an either monotonically increasing or monotonically decreasing entropy.
The error of the relaxation method grows linearly in time while the
baseline scheme results in a quadratic asymptotic error growth in time, at least until the errors are so large that the numerical solution turns out to be useless. This is something that we can only see for the baseline scheme, relaxation clearly improves the long-time behaviour of the error.

The different behavior of the error growth in time can be understood, e.g.,
using the theory of \cite{cano1997error,duran1998numerical,calvo2011error}.
Indeed, we are looking for a periodic orbit, which is a relative equilibrium
solution of the Hamiltonian system \eqref{eq:nonlinear_osc}. Thus,
structure-preserving numerical methods are expected to result not only
in improved qualitative behavior but also yield quantitatively better
results due to a reduced error growth rate in time.

The corresponding results for the entropy-dissipative nonlinear oscillator
\eqref{eq:damped_nonlinear_osc} are shown in
Figure~\ref{fig:nonlinear_osc-CT} in the lower part.
As expected for an accurate method and a strictly dissipative problem,
the entropy decays over time. It is interesting to see that the relaxed
method results in an entropy that is visually indistinguishable from the
entropy of the analytical solution while the baseline methods clearly deviate. In addition, the error of the relaxed solution is
smaller and grows slightly slower in time than the error of the baseline
method. Since this problem is entropy-dissipative, it does not fit into
the framework of \cite{cano1997error,duran1998numerical,calvo2011error}
mentioned earlier. Nevertheless, applying relaxation still improves both
the qualitative and quantitative behavior of the numerical solution.

The same investigation has been done for the implicit schemes mentioned in the previous convergence studies, see Fig.~\ref{fig:nonlinear_osc-implicit}. Similar observations can be made. It has to be noted that, again, the third-order SSP scheme profits a lot from relaxation, which is visible both from the error growth and the evolution of $\eta$. For the even-order collocation schemes, this phenomenon is less pronounced. In particular, for these schemes, the quadratic functional $\eta$, in the entropy-conservative case, is conserved already for the baseline scheme.
Obviously, this is also backed up by the findings from the convergence study in Fig.~\ref{fig:implicit_oscillator}. A (small) advantage can be seen for the collocation schemes in the entropy-dissipative case.

For the entropy-dissipative case, we estimated $\eta^{\text{new}}$ only through a Hermite-Birkhoff interpolation, as previous investigation has shown that the effect is negligible.

\begin{figure} [ht]
        \centering
 \ifthenelse{\boolean{compilefromscratch}}
    {
      \tikzsetnextfilename{osc_cons_errorgrowth_impl}
      \input{tikz_source/osc_cons_errorgrowth_impl.tex}
      \tikzsetnextfilename{osc_cons_entropy_impl}
      \input{tikz_source/osc_cons_entropy_impl.tex}

    }
    {
      \includegraphics{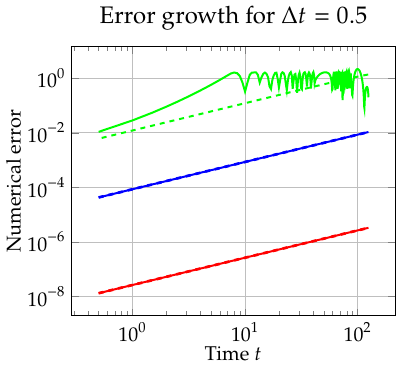}
      \includegraphics{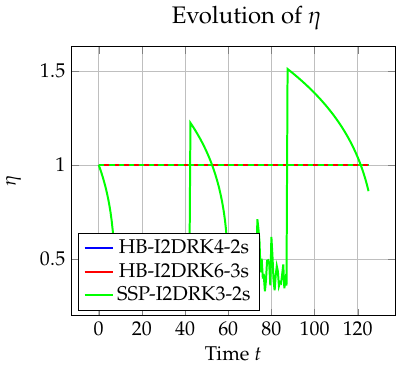}
    }

 \ifthenelse{\boolean{compilefromscratch}}
    {
      \tikzsetnextfilename{osc_diss_errorgrowth_impl}
      \input{tikz_source/osc_diss_errorgrowth_impl.tex}
      \tikzsetnextfilename{osc_diss_entropy_impl}
      \input{tikz_source/osc_diss_entropy_impl.tex}

    }
    {
      \includegraphics{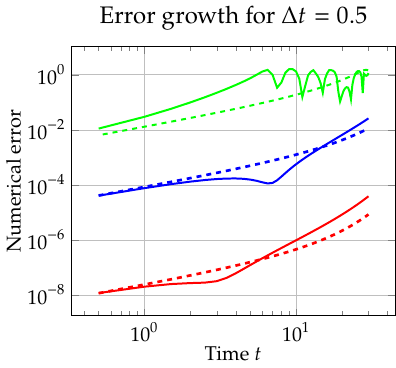}
      \includegraphics{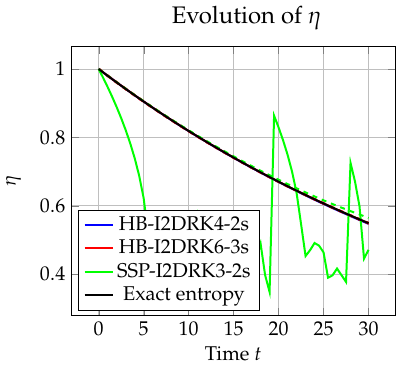}
    }
        \caption{Temporal evolution of the numerical error and the functional $\eta$ for the nonlinear oscillators using several implicit schemes, see also Sec.~\ref{sec:sspbutcher} and Sec.~\ref{sec:hbbutcher}.
        Straight lines correspond to the baseline schemes, while dashed lines use relaxation.
        Top: conservative case \eqref{eq:nonlinear_osc}, bottom: dissipative case \eqref{eq:damped_nonlinear_osc}.
        As the actual way how $\eta^{\text{new}}$ is computed for the entropy-dissipative case turned out to have a negligible effect, only the Hermite-Birkhoff interpolation of $u$ is used.
        Note that the baseline schemes for HB-I2DRK4-2s and HB-I2DRK6-3s are already excellent at preserving the functional for the entropy-conservative case, which is why it seems like there is only one plotted line. This is also very much in line with the convergence results from Fig.~\ref{fig:implicit_oscillator}.
        The legend on the right is also valid for the figure on the left.
        }\label{fig:nonlinear_osc-implicit}
    \end{figure}

\subsection{Convergence experiment II: Kepler's problem}
As a second test problem with a non-quadratic entropy, we discuss Kepler's problem following \cite{ranocha2020general}, which is given by
\begin{align}
 \label{eq:Kepler}
 q_i'(t) = p_i(t), \qquad p_i'(t) = - \frac{q_i(t)}{\|q(t)\|^3}, \quad i \in \{1,2\}.
\end{align}
Here, $q\colon \R^+ \rightarrow \R^2$ and $p\colon \R^+ \rightarrow \R^2$ are the unknown functions.
We choose the same initial conditions as in \cite{ranocha2020general}, i.e.,
\begin{align*}
 q(0) = \left(1-e, 0\right)^T, \qquad p(0) = \left(0, \sqrt{\frac{1-e}{1+e}}\right)^T,
\end{align*}
with eccentricity $e = \frac 1 2$. As ``entropy'' functional $\eta$, we choose the angular momentum
\begin{align*}
 \eta(q,p) := q_1 p_2 - q_2 p_1,
\end{align*}
which, for the exact solution, is a conserved quantity.

As in the previous section, we start with the class of explicit schemes from \cite{chan2010explicit}; and we add two third-derivative schemes from \cite{TurTur2017} to demonstrate that our approach is independent of the number of derivatives. Numerical results have been plotted in Fig.~\ref{fig:KeplerCT} for a final time of $T=5$. As $\eta$ is not a quadratic functional, we do not expect superconvergence and, indeed, both baseline and relaxed methods come with their design order of accuracy. However, we can repeat our observation for the odd-order schemes from the previous section, which is that the errors are typically lower, sometimes even significantly lower, and that the asymptotic regime is reached faster.

\begin{figure}[htbp]
     \centering
      \ifthenelse{\boolean{compilefromscratch}}
    {
      \tikzsetnextfilename{kepler_cons_explicit}
      \input{tikz_source/kepler_cons_explicit.tex}
    }
    {
      \includegraphics{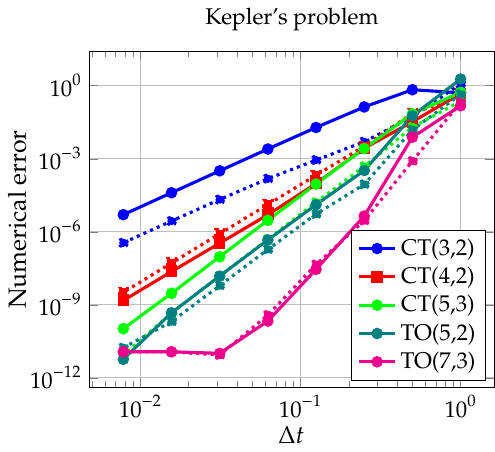}
    }
    \caption{Convergence results for Kepler's problem \eqref{eq:Kepler} using the explicit third, fourth and fifth order methods of \cite{chan2010explicit}, see also Sec.~\ref{sec:ctbutcher}; and the three-derivative schemes of order 5 and 7 of Turaci and \"{O}zi\c{s} \cite{TurTur2017}, see also Sec.~\ref{sec:turturbutcher}.
    Dotted lines correspond to the relaxed schemes, straight lines to the baseline schemes. }
    \label{fig:KeplerCT}
\end{figure}

Fig.~\ref{fig:Kepler_Convg_Implicit} shows convergence results for a couple of implicit collocation-based schemes of orders three, eight, and twelve, with up to four derivatives. Due to the relatively high order of the schemes involved, the final time has been increased to $T=50$.
On the left, baseline schemes are plotted, while on the right, the relaxed versions are displayed.
Design accuracies are met, with the exception of the baseline scheme of order three that has not reached the asymptotic regime yet. (It has been checked that it converges, though.) Again, the relaxed version seems to reach the asymptotic regime way faster.
Please note that as $\eta$ is not quadratic any more, the functional is not preserved exactly by the baseline schemes, but it is by the relaxed schemes.

\begin{figure}[htbp]
        \centering

 \ifthenelse{\boolean{compilefromscratch}}
    {
      \tikzsetnextfilename{kepler_conv_implicit_base}
      \input{tikz_source/kepler_conv_implicit_base.tex}
      \tikzsetnextfilename{kepler_conv_implicit_relax}
      \input{tikz_source/kepler_conv_implicit_relax.tex}

    }
    {
      \includegraphics{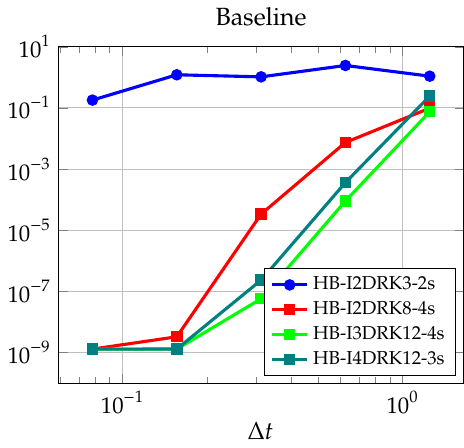}
      \includegraphics{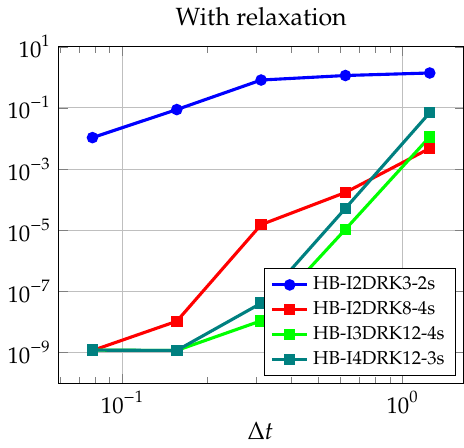}
    }
        \caption{Several multiderivative timesteppers applied to the entropy conserving Kepler problem for various values of $k_{\max}$. The final end time has been set to $T = 50$ to account for the rather high order of the schemes involved (three, eight, twelve and twelve, respectively).
        The left figure displays the baseline schemes, while the right figure displays the schemes with a relaxation.
        Convergence stalls relatively early due to the used, numerically computed, reference solution, which obviously tends to become less accurate with larger final time $T$.
        }\label{fig:Kepler_Convg_Implicit}
    \end{figure}

\paragraph{Error growth in time for Kepler's problem}

Similar as before, we show the behaviour of the error and the functional $\eta$ as a function of time, exemplarily for the explicit schemes of Chan and Tsai \cite{chan2010explicit}, see also Sec.~\ref{sec:ctbutcher}, and Turaci and \"{O}zi\c{s} \cite{TurTur2017}, see also Sec.~\ref{sec:turturbutcher}. Findings have been plotted in Fig.~\ref{fig:kepler_errorgrowth_ct}. On the left, error evolution over time and on the right, functional evolution over time have been plotted. In all the cases, the baseline scheme behaves worse than its relaxed counterpart.

	\begin{figure}[htbp]
        \centering
 \ifthenelse{\boolean{compilefromscratch}}
    {
      \tikzsetnextfilename{kepler_errorgrowth_expl}
      \input{tikz_source/kepler_errorgrowth_expl.tex}
      \tikzsetnextfilename{kepler_entropy_expl}
      \input{tikz_source/kepler_entropy_expl}

    }
    {
      \includegraphics{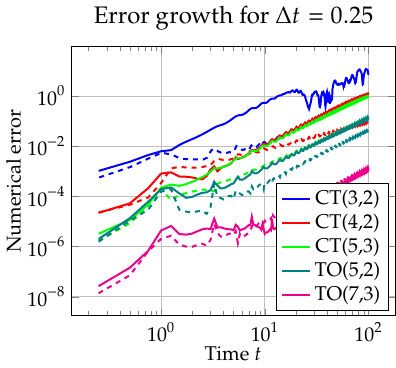}
      \includegraphics{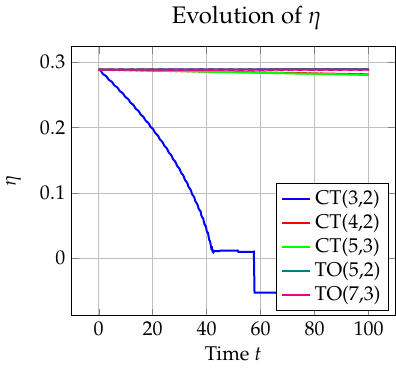}
    }

        \caption{Temporal evolution of the numerical error and the functional $\eta$ for Kepler's problem \eqref{eq:Kepler} using the explicit third, fourth and fifth order methods of \cite{chan2010explicit}, see also Sec.~\ref{sec:ctbutcher}; and the three-derivative schemes of order 5 and 7 of Turaci and \"{O}zi\c{s} \cite{TurTur2017}, see also Sec.~\ref{sec:turturbutcher}.
        Dotted lines correspond to the relaxed schemes, straight lines to the baseline schemes.
        }\label{fig:kepler_errorgrowth_ct}
    \end{figure}

\subsection{Convergence experiment III: dissipated exponential entropy}

As a final ODE problem, we consider an admittedly rather artificial equation with a non-quadratic dissipative entropy. The problem is taken from \cite{ranocha2020general}, it is given by the equation
\begin{align}
 \label{eq:dissexp}
 u'(t) = -e^{u(t)}, \qquad u(0) = 0.5.
\end{align}
For all results, final time has been set to $T=2.5$.

For the nonlinear functional $\eta(u) = e^u$, it is straightforward to check that solutions to \eqref{eq:dissexp} fulfill $\frac{\mathrm{d}}{\mathrm{d}t} \eta(u) \leq 0$.

In Fig.~\ref{fig:DissExponentialConvg}, convergence results for a variety of methods have been plotted. First of all, the findings from the previous sections are valid for this testcase as well. As we are dealing with a dissipative rather than a conservative case, it is to be expected that the quality of the solution does not improve dramatically by introducing relaxation, which is also visible from Fig.~\ref{fig:DissExponentialConvg}. In fact (not plotted here), already the baseline schemes produce, in the range of $\dt$ considered here, solutions with a decaying entropy $\eta$. The advantage of using a relaxed scheme is that one can be absolutely certain, in a mathematically rigorous way, that the functional decays. Please note that, in contrast to previous findings, the relaxed schemes have a slightly higher error constant here. Also for this case, continuous output versus Hermite-Birkhoff interpolation in estimating $\eta^{\text{new}}$ has been tested; the differences are not significant and hence not shown here.

\begin{figure}[h!]
     \centering

\ifthenelse{\boolean{compilefromscratch}}
    {
      \tikzsetnextfilename{de_conv_ct}
      \input{tikz_source/de_conv_ct.tex}
      \tikzsetnextfilename{de_conv_impl}
      \input{tikz_source/de_conv_impl}

    }
    {
      \includegraphics{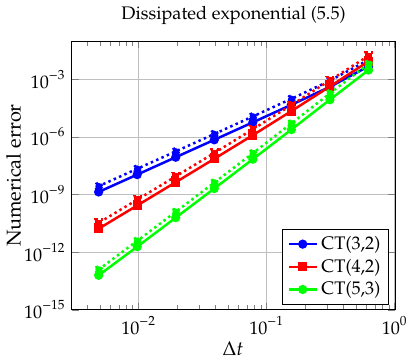}
      \includegraphics{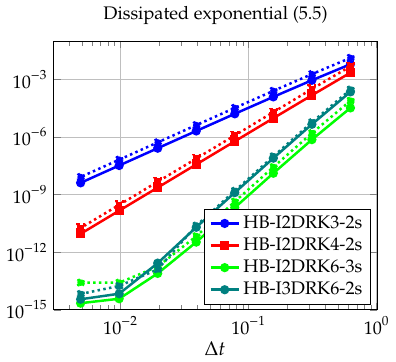}
    }
    \caption{Convergence results for the dissipated exponential problem \eqref{eq:dissexp}. Left: explicit third, fourth and fifth order methods of \cite{chan2010explicit}, see also Sec.~\ref{sec:ctbutcher}; and the three-derivative schemes of order 5 and 7 of Turaci and \"{O}zi\c{s} \cite{TurTur2017}, see also Sec.~\ref{sec:turturbutcher}, are used. Right: some implicit collocation-based schemes.
    Dotted lines correspond to the relaxed schemes, straight lines to the baseline schemes. }
    \label{fig:DissExponentialConvg}
\end{figure}

\subsection{3D inviscid Taylor-Green vortex}
Next, we demonstrate the robustness of the methods for a 3D simulation
of compressible turbulence. For this, we consider the 3D compressible Euler
equations of an ideal gas with ratio of specific heats $\gamma = 1.4$
and choose the classical inviscid Taylor-Green vortex initial data
\cite{gassner2016split}
\begin{equation}
\begin{gathered}
  \rho = 1, \;
  v_1 =  \sin(x_1) \cos(x_2) \cos(x_3), \;
  v_2 = -\cos(x_1) \sin(x_2) \cos(x_3), \;
  v_3  = 0, \\
  p = \frac{\rho^0}{\mathrm{Ma}^2 \gamma} + \rho^0 \frac{\cos(2 x_1) \cos(2 x_3) + 2 \cos(2 x_2) + 2 \cos(2 x_1) + \cos(2 x_2) \cos(2 x_3)}{16},
\end{gathered}
\end{equation}
with Mach number $\mathrm{Ma} = 0.1$, density $\rho$, velocity $v$, and
pressure $p$. The domain $[-\pi, \pi]^3$ is equipped with periodic boundary
conditions. The spatial semidiscretization is performed using flux differencing
DG methods, specifically the discontinuous Galerkin
spectral element method (DGSEM) on tensor product elements using the
Gauss-Lobatto-Legendre nodes with $8^3$ uniform elements and polynomials of
degree $p = 3$, resulting in \num{32768} degrees of freedom (DOFs). We use the
entropy-conservative flux of
\cite{ranocha2018comparison,ranocha2020entropy,ranocha2021preventing}
in the volume; the surface flux is either the same entropy-conservative flux
or a dissipative local Lax-Friedrichs/Rusanov numerical flux.
See \cite{fisher2013high,gassner2016split,ranocha2023efficient}
for a description of these methods.

\begin{figure}[htbp]
\centering
  \begin{subfigure}{0.49\textwidth}
  \centering

\ifthenelse{\boolean{compilefromscratch}}
    {
      \tikzsetnextfilename{tgv_cons}
      \input{tikz_source/tgv_cons.tex}
    }
    {
      \includegraphics{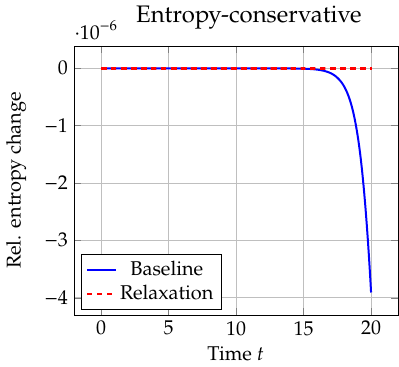}
    }
    \caption{Entropy-conservative semidiscretization.}
  \end{subfigure}%
  \hspace{\fill}
  \begin{subfigure}{0.49\textwidth}
  \centering
\ifthenelse{\boolean{compilefromscratch}}
    {
      \tikzsetnextfilename{tgv_diss}
      \input{tikz_source/tgv_diss.tex}
    }
    {
      \includegraphics{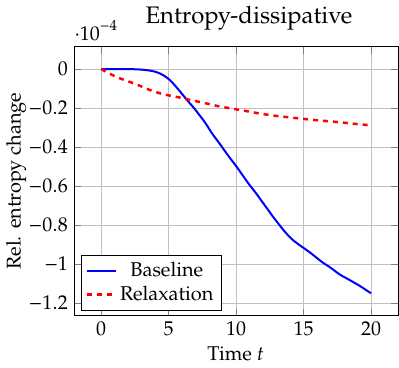}
    }
    \caption{Entropy-dissipative semidiscretization.}
  \end{subfigure}%
  \caption{Numerical results for the 3D inviscid Taylor-Green vortex using
           discontinuous Galerkin methods in space and the explicit fourth-order,
           two-derivative method of \cite{chan2010explicit} in time. Shown is the
           relative change of the entropy
           $\bigl(\eta(u) - \eta(u^0) \bigr) / |\eta(u^0)|$.
           }
  \label{fig:taylor_green_vortex}
\end{figure}

We choose the classical physical entropy of the compressible Euler equations
given by
\begin{equation}
  \int -\frac{\rho s}{\gamma - 1},
  \qquad
  s = \log \frac{p}{\rho^\gamma},
\end{equation}
and its numerical realization via the collocation quadrature rule associated
with the DGSEM.
The entropy of numerical solutions obtained by the fourth-order, two-derivative
method of \cite{chan2010explicit} with time step size $\dt = 0.005$ is shown in
Figure~\ref{fig:taylor_green_vortex}\footnote{Since the time step size of explicit
methods is limited by stability for this problem, an adaptive choice
of the step sizes typically leads to an asymptotically constant $\dt$
\cite{ranocha2023stability}.}.
Clearly, the baseline method yields a visible
change of the entropy over time in the entropy-conservative case while relaxation
is able to conserve the entropy as expected.
In the entropy-dissipative case, the baseline time integration method yields
less dissipation initially but more dissipation for larger times.
This numerical experiment demonstrates the robustness of the method even when
combined with highly-nonlinear discretizations of PDEs and complicated entropy
functionals.

\subsection{Benjamin-Bona-Mahony equation}
Next, we consider solitary wave solutions of the Benjamin-Bona-Mahony (BBM)
equation \cite{benjamin1972model}
\begin{equation}
\label{eq:bbm}
  \partial_t u + \partial_x u + \partial_x \frac{u^2}{2} - \partial_x^2 \partial_t u = 0.
\end{equation}
We choose the spatial domain $[-90, 90]$ equipped with periodic boundary
conditions and an initial condition corresponding to the solitary wave
solution
\begin{equation}
  u(t, x) = \frac{A}{\cosh\bigl(K (x - c t) \bigr)^2},
  \quad
  \text{with }
  A = 3 (c - 1),
  \;
  K = \frac{1}{2} \sqrt{1 - 1 / c},
  \;
  c = 1.2.
\end{equation}
We use the method-of-lines approach and combine an entropy-conservative
semidiscretization of \cite{ranocha2021broad} with the two-derivative
Runge-Kutta method \eqref{eq:CT(4,2)} of \cite{chan2010explicit}.
The schemes conserve a discrete equivalent of the invariant
\begin{equation}
  \int \left( u(t, x)^2 + \bigl( \partial_x u(t, x) \bigr)^2 \right) \dif x.
\end{equation}

\begin{figure}[htbp]
    \centering
    \ifthenelse{\boolean{compilefromscratch}}
    {
      \tikzsetnextfilename{bbm_errorgrowth}
      \input{tikz_source/bbm_errorgrowth.tex}
      \tikzsetnextfilename{bbm_entropy}
      \input{tikz_source/bbm_entropy.tex}
    }
    {
      \includegraphics{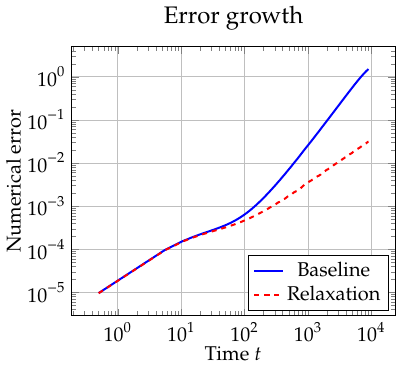}
      \includegraphics{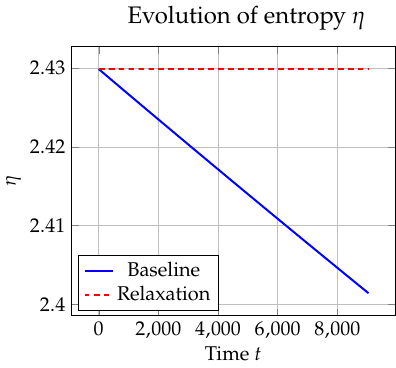}
    }

  \caption{Numerical results for the BBM equation \eqref{eq:bbm} discretized
           in space using an entropy-conservative Fourier collocation method
           with $2^8$ nodes \cite{ranocha2021broad} and using the
           fourth-order method CT(4,2) of \cite{chan2010explicit} with
           $\dt = 0.5$ in time.}
  \label{fig:bbm-CT(4,2)}
\end{figure}

The numerical results are shown in Figure~\ref{fig:bbm-CT(4,2)}. Clearly,
the relaxation method conserves the entropy while the baseline scheme is
entropy-dissipative in this case. The asymptotic error growth is linear
for the relaxed method and quadratic for the baseline scheme. This behavior
is expected based on the theory of \cite{araujo2001error}, which is an
extension of the relative equilibrium theory of \cite{duran1998numerical}
to Hamiltonian PDEs. Such a property has first been proved rigorously for
the Korteweg-de Vries equation in \cite{frutos1997accuracy} and later been
extended to other equations such as the nonlinear Schrödinger equation
\cite{duran2000numerical}. The general framework also covers many other
nonlinear dispersive wave equations \cite{ranocha2021rate}.

\subsection{Korteweg-de Vries equation}
Here, we consider the Korteweg-de Vries-equation, which is a third-order nonlinear partial differential equation, given by
\begin{alignat}{2}
 \label{eq:kdv}
 u_t + \left(\frac{u^2}{2}\right)_x + u_{xxx} &= 0, &\qquad& (x, t) \in \R\times \R^+, \\
 u(0, x) &= u_0(x), &\qquad& x \in \R.
\end{alignat}
Please note that this is, due to the occurrence of both nonlinear and third-order terms, a rather complex testcase. In particular, the term $u_{xxx}$ has eigenvalues on the imaginary axis.
There exists an exact soliton solution to this equation \cite{frutos1997accuracy}, given by
\begin{align*}
 u(t, x) = 2 \cosh\left(\frac{\tilde x}{\sqrt 6} \right)^{-2},
\end{align*}
where we have defined $\tilde x$ to be
\begin{align*}
 \tilde x = \mmod\biggl(x - \frac{2t}{3}, 80\biggr) - 40.
\end{align*}
The solution $u$ on $\R$ is hence a periodic repetition of the solution at, e.g., $x \in [0, 80]$, which is the domain that we actually discretize, with periodic boundary conditions, in our work. The time period is given by $T_{\text{per}} = \frac{3 \cdot 80}{2} = 120$. The discretization is done through a 17-point stencil Lagrangian finite difference scheme on a grid with 1000 elements, rendering the spatial resolution relatively high. It is evident both from the character of the Korteweg-de-Vries equation and the high-order discretization that implicit schemes are an absolute necessity here.
For the spatial discretization, the Burgers-term $(u^2/2)_x$ in \eqref{eq:kdv} has been rewritten in the equivalent (at least for smooth solutions) split-form
\begin{align*}
 \frac 1 2 (u^2)_x \equiv \frac 1 3 (u^2)_x + \frac 1 3 u u_x.
\end{align*}
This form has been used as with this form, it is straightforward to show that the functional
\begin{align}
 \label{eq:kdventropie}
 \eta(u) = \int_{\R} u^2 \mathrm{d} x
\end{align}
is preserved.

Fig.~\ref{fig:KdVConvg} shows convergence results for three implicit schemes, always as baseline and relaxation scheme. The final time has been set as a rather short $T=10$ on the left, and $T=360$ on the right. $T=360$ corresponds to three temporal periods of the solution. As before, only little difference can be seen for the HB-I2DRK4-2s and the HB-I2DRK6-3s scheme in their baseline and relaxed version. However, the behavior of the HB-I2DRK3-2s scheme is very interesting. For the smaller time $T=10$, the relaxed version converges with third order, while for the larger time $T=360$, it converges with fourth order rather than three, which is the baseline scheme's order. Obviously, this is a very odd behavior that one would not have anticipated. Hence, in Fig.~\ref{fig:kdv_errorgrowth}, we have plotted for this particular scheme the error as a function of time for the baseline scheme (left) and the relaxed scheme (right) for various values of $\dt$. The error distribution for the baseline scheme is relatively smooth, and from about $t = 10$ on, it behaves quadratically in time. This is not true for the relaxed scheme, where there is a relatively long 'plateau' phase, after which the error grows linearly in time, in accordance with the analysis of \cite{frutos1997accuracy}. This plateau phase seems to last longer, the smaller the timestep $\dt$ is. This behavior, in our opinion, explains the different orders that we see here. In any case, it can be said that for HB-I2DRK3-2s, the relaxed scheme behaves significantly better than the unrelaxed one.

There are no SSP results, as we observed severe stability issues in this case. This can of course be explained from the relatively low stability angles as observed in Fig.~\ref{fig:stability-angles-2}.

\begin{figure}[!htbp]
     \centering
\ifthenelse{\boolean{compilefromscratch}}
    {
      \tikzsetnextfilename{kdv_convg_T10}
      \input{tikz_source/kdv_convg_T10.tex}
      \tikzsetnextfilename{kdv_convg_T360}
      \input{tikz_source/kdv_convg_T360.tex}
    }
    {
      \includegraphics{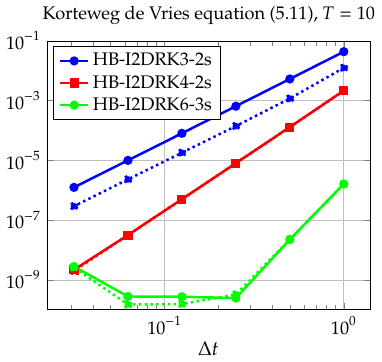}
      \includegraphics{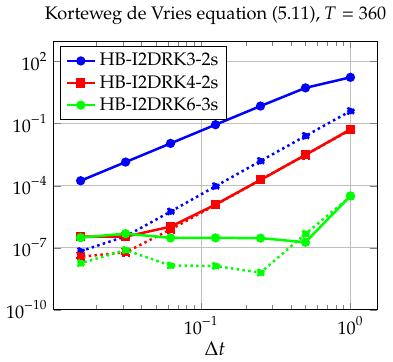}
    }

    \caption{Convergence results for the Korteweg de Vries equation \eqref{eq:kdv}. Shown are the outputs of some implicit collocation-based schemes. Left: Final time $T=10$, right: final time $T=360$.
    Dotted lines correspond to the relaxed schemes, straight lines to the baseline schemes. For the HB-I2DRK4-2s scheme, baseline and relaxation scheme lie on top of each other; for the HB-I2DRK6-3s scheme, they are fairly close to each other. }
    \label{fig:KdVConvg}
\end{figure}

\begin{figure}[!htbp]
    \centering
\ifthenelse{\boolean{compilefromscratch}}
    {
      \tikzsetnextfilename{kdv_error_baseline}
      \input{tikz_source/kdv_error_baseline.tex}
      \tikzsetnextfilename{kdv_error_relaxed}
      \input{tikz_source/kdv_error_relaxed.tex}
    }
    {
      \includegraphics{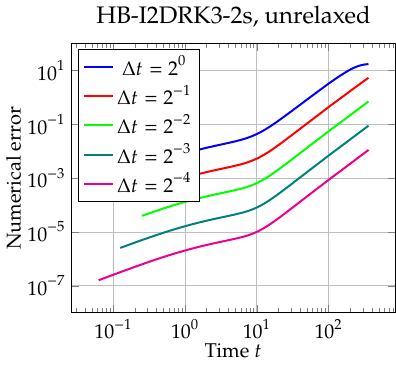}
      \includegraphics{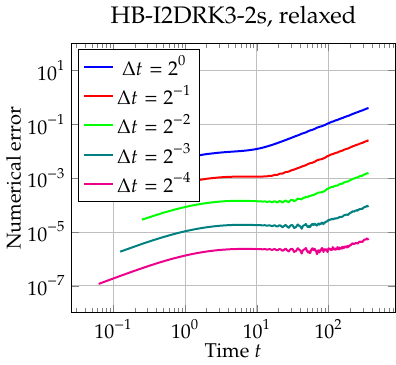}
    }

        \caption{Temporal evolution of the numerical error for the Korteweg de Vries equation \eqref{eq:kdv} discretized using the HB-I2DRK3-2s scheme for various timesteps $\dt$. Final time is $T=360$, which corresponds to three temporal periods of the solution. Left: Baseline scheme, right: relaxed scheme. The fact that the curves start at different positions on the $t-$axis is due to the fact that $t=0$ cannot be represented on this log-log-plot; the first data point is then at $\dt$.
        }\label{fig:kdv_errorgrowth}
    \end{figure}

\section{Summary and conclusions}
\label{sec:summary}
We have combined the recently developed relaxation approach to preserve
important functionals (``entropies'') of numerical solutions of ODEs obtained by high-order
multiderivative time integration methods with up to four temporal derivatives. Extending our previous work \cite{ranocha2023functional}, we have considered not only invariants but also
dissipated functionals. The latter required the development of appropriate
entropy estimates in Section~\ref{sec:entropy-estimates}.
In addition, we have analyzed stability properties of multiderivative
relaxation methods in Section~\ref{sec:stability}. The numerical results of
Section~\ref{sec:numerical_experiments} have demonstrated the robustness of
the approach as well as qualitative and quantitative improvements of
numerical solutions.

Future work in this direction will concentrate on energy-stable DG discretizations of the Cahn-Hilliard equation using a relaxation approach. Furthermore, numerical results have shown that some even-order collocation-based multiderivative schemes seem to handle quadratic entropies very well. A theoretical and practical study of the implications here will be performed.

\section*{Acknowledgments}

HR was supported by the Deutsche Forschungsgemeinschaft
(DFG, German Research Foundation, project number 513301895)
and the Daimler und Benz Stiftung (Daimler and Benz foundation,
project number 32-10/22).

\printbibliography

\appendix

\section{Butcher tableaux}

Several multiderivative Runge-Kutta schemes have been used in this work. For convenience, we list them here in this appendix.

\subsection{Third-, fourth-, and fifth-order schemes from Chan and Tsai \cite{chan2010explicit}}\label{sec:ctbutcher}

\begin{equation}
\label{eq:CT(3,2)}
  \text{CT(3,2): } \qquad
  \renewcommand*{\arraystretch}{1.4}
  \begin{array}{c | cc || cc}
    0 &  &  &  & \\
    1 & 1 &  & \frac{1}{2} & \\
    \hline
      &  \frac 2 3 & \frac 1 3  & \frac{1}{6} & 0
  \end{array}
\end{equation}

\begin{equation}
\label{eq:CT(4,2)}
  \text{CT(4,2): } \qquad
  \renewcommand*{\arraystretch}{1.4}
  \begin{array}{c | cc || cc}
    0 &  &  &  & \\
    \frac{1}{2} & \frac{1}{2} &  & \frac{1}{8} & \\
    \hline
      &  1 & 0  & \frac{1}{6} & \frac{1}{3}
  \end{array}
\end{equation}

\begin{equation}
\label{eq:CT(5,3)}
  \text{CT(5,3): } \qquad
  \renewcommand*{\arraystretch}{1.4}
  \begin{array}{c | ccc || ccc}
    0 &  &  &  & \\
    \frac{2}{5} & \frac{2}{5} & & & \frac{2}{25} & \\
    1 & 1 & & & \frac{-1}{4} & \frac 3 4 \\
    \hline
      &  1 & 0 &0  & \frac{1}{8} & \frac{25}{72} & \frac 1 {36}
  \end{array}
\end{equation}

\subsection{Three-derivative schemes from Turac{i} and \"{O}zi\c{s} \cite{TurTur2017}}
\label{sec:turturbutcher}
\begin{equation}
\label{eq:TT(5,2)}
  \text{TO(5,2): } \qquad
  \renewcommand*{\arraystretch}{1.4}
  \begin{array}{c | cc || cc || cc}
    0 &  &  &  & \\
    \frac 2 5 & \frac 2 5 & 0 & \frac 2 {25} & & \frac 4 {375} & \\
    \hline
      & 1 & 0 & \frac 1 2 & 0 & \frac 1 {16} & \frac 5 {48}
  \end{array}
\end{equation}

\begin{equation}
\label{eq:TT(7,3)}
  \text{TO(7,3): } \qquad
  \renewcommand*{\arraystretch}{1.4}
  \begin{array}{c | ccc || ccc || ccc}
    0 & & & & & & &  & & \\
    c_2 & c_2 & 0 & 0 & \frac{c_2^2}{2} & 0 & 0 & \frac{c_2^3}{6} \\
    c_3 & c_3 & 0 & 0 & \frac{c_3^2}{2} & 0 & 0 & \frac{c_3^3}{6}-a_{32} & a_{32} & \\
    \hline
      & 1 & 0 & 0 & \frac 1 2 & 0 & 0 & \frac 1 {30} & \frac 1 {15} + \frac{13\sqrt 2}{480} & \frac 1 {15} - \frac{13\sqrt{2}}{480}
  \end{array}
\end{equation}
Here, $c_2 = \frac{3-\sqrt{2}} 7$, $c_3 = \frac{3+\sqrt 2}{7}$ and $a_{32} = \frac{122 + 71\sqrt{2}}{7203}$.

\subsection{SSP schemes from Gottlieb and co-workers \cite{gottlieb2022high}}
\label{sec:sspbutcher}
\begin{equation}
\label{eq:SSP3}
  \text{SSP-I2DRK3-2s: } \qquad
  \renewcommand*{\arraystretch}{1.4}
  \begin{array}{c | cc || cc}
    0 & 0 & 0 & -\frac 1 6 & 0\\
    1 & 0 & 1 & -\frac 1 6 & -\frac 1 3 \\
    \hline
      & 0 & 1 & -\frac 1 6 & -\frac 1 3
  \end{array}
\end{equation}

The coefficients for SSP-I2DRK4-5s are not as nice as for the above examples; we refer the reader to \cite[Section~2.3, Paragraph ``Fourth order'']{gottlieb2022high}.

\subsection{Some collocation-based Hermite-Birkhoff Runge-Kutta schemes}
\label{sec:hbbutcher}

\begin{table}[h!]
\centering
\caption{Runge-Kutta table of the continuous HB-I2DRK6-3s method. The method is uniformly sixth-order in $0 \leq \theta \leq 1$. As usual, the original Runge-Kutta method is obtained by setting $\theta = 1$.  Note that the Butcher tableau corresponding to the second derivative has been printed under the one for the first derivative.}\label{tbl:contHB-I2DRK6-3s}
\begin{tabular}{c|ccc}
$0$ & $0$   & $0$   & $0$    \\
$1/2$ & $101/480$   & $8/30$ & $55/2400$ \\
$1$ & $7/30$   & $16/30$   & $7/30$ \\ \hline &
$4\,\theta ^6-\frac{68\,\theta ^5}{5}+\frac{33\,\theta ^4}{2}-\frac{23\,\theta ^3}{3}+\theta$ &
$\frac{8\,\theta ^3\,\left(6\,\theta ^2-15\,\theta +10\right)}{15}$ &
$-\frac{\theta ^3\,\left(120\,\theta ^3-312\,\theta ^2+255\,\theta -70\right)}{30}$
\\ \hline
  &$0$  & $0$   & $0$ \\
  &$65/4800$ & $-25/600$ & $-25/8000$ \\
  & $5/300$ & $0$ & $-5/300$ \\ \hline &
$\frac{\theta ^2\,\left(40\,\theta ^4-144\,\theta ^3+195\,\theta ^2-120\,\theta +30\right)}{60}$ &
$\frac{8\,\theta ^3\,{\left(\theta -1\right)}^3}{3}$ &
$\frac{\theta ^3\,\left(40\,\theta ^3-96\,\theta ^2+75\,\theta -20\right)}{60}$
\end{tabular}
\end{table}

One set of coefficients is given in \autoref{tbl:contHB-I2DRK6-3s}.
The coefficients of the other collocation-based Hermite-Birkhoff Runge-Kutta schemes
used in this work can be generated by the MATLAB code contained in our
reproducibility repository \cite{ranocha2023multiderivativeRepro}.

\end{document}